\newcommand{\x}{\mathbf{x}}
\newcommand{\y}{\mathbf{y}}
\newcommand{\n}{\mathbf{n}}
\newcommand{\bu}{\mathbf{u}}
\newcommand{\reminder}[1]{#1}
\newcommand{\reminderre}[1]{#1}
\newtheorem{thm}{Theorem}[section]
\newtheorem{cor}{Corollary}[section]
\newtheorem{defn}{Definition}[section]
\newtheorem{remark}{Remark}[section]
\newcommand {\pic}{\setlength{\unitlength}{1cm}
\begin{picture}(4,0)
\thicklines \put(-0.5,0){\line(1,0){12}}
\end{picture}
}
\begin{document}

\title{Eulerian Methods for Visualizing Continuous Dynamical Systems using Lyapunov Exponents}
\author{
Guoqiao You\thanks{School of Science, Nanjing Audit University, Nanjing, 211815, China. Email: {\bf 270217@nau.edu.cn}}
\and
Tony Wong\thanks{Department of Mathematics and Institute of Applied Mathematics, University of British Columbia, Vancouver V6T 1Z2, BC, Canada. Email: {\bf tonyw@math.ubc.ca}}
\and
Shingyu Leung\thanks{Department of Mathematics, the Hong Kong University of Science and Technology, Clear Water Bay, Hong Kong. Email: {\bf masyleung@ust.hk}}
}
\maketitle

\begin{abstract}
\reminder{We propose a new Eulerian numerical approach for constructing the forward flow maps in continuous dynamical systems. The new algorithm improves the original formulation developed in \cite{leu11,leu13} so that the associated partial differential equations (PDEs) are solved forward in time and, therefore, the \textit{forward} flow map can now be determined \textit{on the fly}. Due to the simplicity in the implementations, we are now able to efficiently compute the unstable coherent structures in the flow based on quantities like the finite time Lyapunov exponent (FTLE), the finite size Lyapunov exponent (FSLE) and also a related infinitesimal size Lyapunov exponent (ISLE). When applied to the ISLE computations, the Eulerian method is in particularly computational efficient.} For each separation factor $r$ in the definition of the ISLE, typical Lagrangian methods require to shoot and monitor an individual set of ray trajectories. If the scale factor in the definition is changed, these methods have to restart the whole computations all over again. The proposed Eulerian method, however, requires to extract only an isosurface of a volumetric data for an individual value of $r$ which can be easily done using any well-developed efficient interpolation method or simply an isosurface extraction algorithm. Moreover, we provide a theoretical link between the FTLE and the ISLE fields which explains the similarity in these solutions observed in various applications.
\end{abstract}

\section{Introduction}

It is an important task to visualize, understand and then extract useful information in complex continuous dynamical systems from many science and engineering fields including flight simulation \reminderre{\cite{carmoh08,tanchahal10}}, wave propagation \reminderre{\cite{tanpea10}}, bio-inspired fluid flow motions \reminderre{\cite{lipmoh09,grerowsmi10,lukyanfau10}}, ocean current model \reminderre{\cite{lekleo04,shalekmar05}}, and etc. It is, therefore, natural to see many existing tools to study dynamical systems. For example, a lot of work in dynamical systems focuses on understanding different types of behaviors in a model, such as elliptical zones, hyperbolic trajectories, chaotic attractors, mixing regions, to name just a few.

One interesting tool is to partition the space-time domain into subregions based on certain quantity measured along with the passive tracer advected according to the associated dynamical system. Because of such a Lagrangian property in the definition, the corresponding partition is named the Lagrangian coherent structure (LCS). One commonly used quantity is the so-called finite time Lyapunov exponent (FTLE) \cite{hal01,hal01b,hal11,karhal13} which measures the rate of separation of a passive tracer with an infinitesimal perturbation in the initial condition, over a finite period of time. Another popular diagnostic of trajectory separation in dynamical systems is the finite-size Lyapunov exponent (FSLE) \cite{abcpv97,hlht11,cenvul13} which is especially popular in the applications from the oceanography. Instead of introducing an infinitesimal perturbation in the initial condition of the fluid particles, FSLE considers finite perturbation. It does not measure the rate of separation but computes the time required to separate two adjacent tracers up to certain distance.

Since all these quantities are long treated as a Lagrangian property of a continuous dynamical system, most, if not all, numerical methods are developed based on the traditional Lagrangian ray tracing method by solving the ODE system using any well-developed numerical integrator. These approaches, however, require the velocity field defined at arbitrary locations in the whole space depending on the location of each individual particle. This implies that one has to in general implement some interpolation routines in the numerical code. Unfortunately, it could be numerically challenging to develop an interpolation approach which is computationally cheap, \reminder{monotone, and high order accurate}.

An Eulerian approach to the flow map computation for the FTLE has been first proposed in \cite{leu11} which has incorporated the level set method \cite{oshset88}. Based on the phase flow method \cite{canyin06,leuqia09}, we have developed a backward phase flow method for the Eulerian FTLE computations in \cite{leu13}. Based on these partial differential equation (PDE) based algorithms, more recently, we have developed in \cite{youleu14} an efficient Eulerian numerical approach for extracting invariant sets in a continuous dynamical system in the extended phase space (the $\x-t$ space). In \cite{youleu14b}, we have proposed a simple algorithm called VIALS which determines the average growth in the surface area of a family of level surfaces. This Eulerian tool relates closely to the FTLE and provides an alternative for understanding complicated dynamical systems.

In this paper, we are going to further develop these Eulerian tools for continuous dynamical systems. There are several main contributions of this paper. We will first improve the numerical algorithm for \textit{forward} flow map construction. The papers \cite{leu11,leu13} have proposed to construct the \textit{forward} flow map defined on a fixed Cartesian mesh by solving the level set equations or the Liouville equations in the \textit{backward} direction. In particular, to compute the forward flow map from the initial time $t=0$ to the final time $t=T$, one needs to solve the Liouville equations \textit{backward} in time from $t=T$ to $t=0$ with the initial condition given at $t=T$. This implementation is numerically inconvenient, especially when incorporating with some computational fluid dynamic (CFD) solvers, since the velocity field is loaded from the current time $t=T$ backward in time to the initial time. This implies that the whole field at all time steps has to be stored in the desk which might not be practical at all. In this paper, we first propose an Eulerian approach to compute the \textit{forward} flow map \textit{on the fly} so that the PDE is solved \textit{forward} in time. 

\reminder{Developed using this improved algorithm, we propose a simple Eulerian approach to compute the ISLE.} For each of the separation factor $r$ in the definition of the ISLE, typical Lagrangian method requires to shoot an individual set of rays and to monitor the variations in these trajectories. If the scale $r$ is changed, these methods will require to restart the whole computations all over again. In this work, we are going to develop an efficient algorithm which provides the ISLE field for any arbitrary separation factor $r$. For an individual value of $r$, the method requires to extract only an isosurface of a volumetric data. This can be done easily by any well-developed interpolation algorithm or simply the function {\sf isosurface} in {\sf MATLAB}. Another main contribution of this work is to provide a theoretical link between the FTLE and the ISLE. Even though these two quantities are measuring different properties of the flow, it has been reported widely, such as \cite{ppss14}, that they give visually very similar solutions in many examples. We are going to show in particular that their ridges can be identified through the ridge of the largest eigenvalue of the associated Cauchy-Green deformation tensor of the flow. \reminder{To the best of our knowledge, this theoretical result is the first one quantitatively revealing the relationship between the FTLE ridges and the ISLE ridges.}

Indeed, comparisons between FTLE and FSLE have been made in various studies. The paper \cite{blrv01} has pointed out that the FTLE might be unable to recognize the boundaries between the chaotic and the large-scale mixing regime. The article \cite{karhal13} has argued that the FSLE has several limitations in Lagrangian coherence detection including aspects from local ill-posedness, spurious ridges and intrinsic jump-discontinuities. Nevertheless, the work \cite{ppss14} has stated that these two concepts might yield similar results, if properly calibrated, and could be even interchangeable in \reminder{many flow visualizatons such as oceanography and chemical kinetics.} In this paper, however, we would like to emphasize on the computational aspect of using these Lyapunov exponents, rather than on comparing the advantages and the disadvantages of these different methods. The preference for one or the other could be based on the tradition in various fields, or the availability of the computational resources.

This paper is organized as follows. In Section \ref{Sec:Background} we will give a summary of several important concepts including the FTLE, FSLE and ISLE, and also our original Eulerian formulations for computing the flow maps and the FTLE. Then, our proposed Eulerian algorithms will be given in Section \ref{Sec:Proposal}. We will then point out some theoretical properties and relationships between the FTLE and the ISLE in Section \ref{Sec:Relation}. Finally, some numerical examples will be given in Section \ref{Sec:Examples}.


\section{Background}
\label{Sec:Background}

In this section, we will summarize several useful concepts and methods, which will be useful for the developments that we are proposing. We first introduce the definition of various closely related Lyapunov exponents including the finite time Lyapunov exponent (FTLE) \cite{halyua00,hal01,hal01b,shalekmar05,lekshamar07}, the finite size Lyapunov exponent (FSLE) \cite{abccv97,abcpv97,letkan00} and also the infinitesimal size Lyapunov exponent (ISLE) \cite{karhal13}. Then we discuss typical Lagrangian approaches and summarize the original Eulerian approaches as in \cite{leu11,leu13,youleu14,youleu14b}. The discussions here, however, will definitely not be a complete survey. We refer any interested readers to the above references and thereafter.

\subsection{FTLE, FSLE and ISLE}

In this paper, we consider a continuous dynamical system governed by the following ordinary differential equation (ODE)
\begin{equation}
\x'(t;\x_0,t_0)=\bu(\x(t;\x_0,t_0),t) \label{Eqn:ODE}
\end{equation}
with the initial condition $\x(t_0;\x_0,t_0)=\x_0$. The velocity field $\bu:\Omega \times \mathbb{R} \rightarrow \mathbb{R}^d$ is a time dependent Lipschitz function where $\Omega \subset \mathbb{R}^d$ is a bounded domain in the $d$-dimensional space. To simplify the notation in the later sections, we collect the solutions to this ODE for all initial conditions in $\Omega$ at all time $t\in\mathbb{R}$ and introduce the flow map
$$
\Phi_a^b:\Omega \times \mathbb{R} \times \mathbb{R} \rightarrow \mathbb{R}^d
$$
such that $\Phi_a^b(\x_0)=\x(b;\x_0,a)$ represents the arrival location $\x(b;\x_0,a)$ of the particle trajectory satisfying the ODE (\ref{Eqn:ODE}) with the initial condition $\x(a;\x_0,a)=\x_0$ at the initial time $t=a$. This implies that the mapping will take a point from $\x(a;\x_0,a)$ at $t=a$ to another point $\x(b;\x_0,a)$ at $t=b$.

The finite time Lyapunov exponent (FTLE) \cite{halyua00,hal01,hal01b,shalekmar05,lekshamar07} measures the rate of separation between adjacent particles over a finite time interval with an infinitesimal perturbation in the initial location. Mathematically, consider the initial time to be 0 and the final time to be $t$, we have the change in the initial infinitesimal perturbation given by
\begin{eqnarray*}
\delta \mathbf{x}(t) &=& \Phi_0^t(\mathbf{x}+\delta \mathbf{x}(0)) - \Phi_0^t(\mathbf{x}) \nonumber\\
&=& \nabla \Phi_0^t(\mathbf{x}) \delta \mathbf{x}(0) + \mbox{ higher order terms} \, .
\end{eqnarray*}
The leading order term of the magnitude of this perturbation is given by
$$
\| \delta \mathbf{x}(t) \| = \sqrt{ \left<
\delta \mathbf{x}(0), [\nabla \Phi_0^t(\mathbf{x})]^* \nabla \Phi_0^t(\mathbf{x}) \delta \mathbf{x}(0)
\right> } \, .
$$
With the \reminder{Cauchy-Green strain tensor} $\reminder{C_0^t(\mathbf{x})}=[\nabla \Phi_0^t(\mathbf{x})]^* \nabla \Phi_0^t(\mathbf{x})$, we obtain the \reminder{maximum deformation}
$$
\max_{\delta \x(0)} \|\delta \x(t) \| = \sqrt{\lambda_{\max}[\reminder{C_0^t(\mathbf{x})]}} \| \mathbf{e}(0)\| =
e^{ \sigma_0^t(\x) |t|} \|\mathbf{e}(0) \| \, ,
$$
where $\mathbf{e}(0)$ is the eigenvector associated with the largest eigenvalue of the deformation tensor. Using this quantity, the finite-time Lyapunov exponent (FTLE) $\sigma_0^t(\mathbf{x})$ is defined as
\begin{equation}
\sigma_0^t(\mathbf{x}) = \frac{1}{|t|} \ln \sqrt{\lambda_{\max}[\reminder{C_0^t(\mathbf{x})}]} = \frac{1}{|t|} \ln \sqrt{\lambda_0^t(\x)} \, .
\label{Eqn:FTLE}
\end{equation}
where $\lambda_0^t(\x)=\lambda_{\max}(\reminder{C_0^{t}(\x)})$ denotes the largest eigenvalue of the Cauchy-Green tensor. The absolute value of $t$ in the expression reflects the fact that we can trace the particles either \textit{forward} or \textit{backward} in time. In the case when $t<0$, we are measuring the maximum stretch \textit{backward} in time and this corresponds to the maximum compression \textit{forward} in time. To distinguish different measures, we call $\sigma_0^t(\mathbf{x})$ the \textit{forward} FTLE if $t>0$ and the \textit{backward} FTLE if $t<0$.

A related concept is a widely used concept in oceanography called the finite size Lyapunov exponent (FSLE) which tries to measure the time it takes to separate two adjacent particles up to certain distance \cite{abcpv97,hlht11,cenvul13}. There are two length scales in the original definition of the FSLE. One is the initial distance between two particles which is denoted by $\epsilon$. The other one is the so-called separation factor, denoted by $r>1$. For any trajectory with \reminder{initial location} $\x(0;\x_1,0)=\x_1$ with $\|\x_0-\x_1\|=\epsilon$, we first determine the shortest time $\tau_r(\x_1)>0$ so that
$$
\|\x(\tau_r(\x_1);\x_0,0)-\x(\tau_r(\x_1);\x_1,0)\| = \| \Phi_0^{\tau_r(\x_1)}(\x_0)-\Phi_0^{\tau_r(\x_1)}(\x_1) \| = r \, .
$$
The FSLE at the point $\x_0$ is then defined as
$$
\gamma_{\epsilon,r}(\x_0,0)=\max_{\|\x_0-\x_1\|=\epsilon} \frac{ \ln r}{\lvert \tau_r(\x_1)\rvert} \, .
$$

Numerically, on the other hand, the maximization over the constraint $\|\x_0-\x_1\|=\epsilon$ might require special considerations and the corresponding optimization problem might not be easily solved by typical optimization algorithms. One simple approximation is to consider only $2^d$-neighbors of $\x_0$ with each point $\x_1$ obtained by perturbing one coordinate of $\x_0$ with distance $\epsilon$.

To avoid introducing two \reminder{separate} length scales ($\epsilon$ and $r$) and the approximation by $2^d$ neighbors in the final optimization step, one theoretically takes the limit as the length scale $\epsilon$ tends to 0. The FSLE can then be reduced to the infinitesimal size Lyapunov exponent (ISLE) as defined in \cite{karhal13} given by
\begin{equation}
\gamma_r(\mathbf{x},0)=\frac{\ln r}{\lvert \tau_r(\x) \rvert}
\label{Eqn:ISLE}
\end{equation}
where $\lvert \tau_r(\x) \rvert$ is the shortest time for which \reminder{$\sqrt{\lambda_0^{\tau_r(\mathbf{x})}(\x)}=r$}.

Even with tremendous theoretical development, there are not many discussions on the numerical implementations. Because the quantity is defined using Lagrangian particle trajectories, most numerical algorithms for computing the FTLE or the FSLE are developed based on the ray tracing method which tries to solve the flow system (\ref{Eqn:ODE}) using a numerical integrator. In the two dimensional case, for example, the flow map is first determined by solving the ODE system with the initial condition imposed on a uniform mesh $\x=\x_{i,j}$. Then one can apply simple finite difference to determine the deformation tensor and, therefore, its eigenvalues. The computation of the FTLE is a little easier. For a given final time, one simply solves the ODE system up to that particular time level. Then the FTLE at each grid location can be computed directly from these eigenvalues. The ISLE, on the other hand, is a little more involved since for a particular separation factor, $r$, one has to continuously monitor the change in these eigenvalues and record the precise moment when it reaches the thresholding value.

Computational complexity could be one major concern of these numerical tools. Let $M=O(\Delta x^{-1})$ and $N=O(\Delta t^{-1})$ be the number of grid points in each physical direction and in the temporal direction, respectively. The total number of operations required to compute the flow map $\Phi_{t_0}^{t_N}(\x_{i,j})$ is, therefore, $O(M^2N)$. Note however that such complexity is needed for determining the FTLE or the ISLE on one single time level. For all time levels, the computational complexity for these usual Lagrangian implementation is $O(M^2N^2)$. There are several nice ideas to improve the computational time. One natural idea is to concentrate the computations near the locations where the solutions have large derivatives by implementing an adaptive mesh strategy \cite{sadpei07,ggth07,lekros10}. Another interesting approach is to avoid the computation of the overall map $\Phi_{t_0}^{t_N}$ but to numerically decompose it into sub-maps $\Phi_{t_n}^{t_{n+1}}$ \cite{brurow10}. Then, the extra effort from $\Phi_{t_0}^{t_{N}}$ to $\Phi_{t_1}^{t_{N+1}}$, given by
\begin{eqnarray*}
\Phi_{t_0}^{t_{N}} &=& \Phi_{t_{N-1}}^{t_N} \circ \cdots \circ \Phi_{t_1}^{t_2} \circ \Phi_{t_0}^{t_1} \\
\Phi_{t_1}^{t_{N+1}} &=& \Phi_{t_{N}}^{t_{N+1}} \circ \cdots \circ \Phi_{t_2}^{t_3} \circ \Phi_{t_1}^{t_2} \, ,
\end{eqnarray*}
are simply some interpolations. All these proposed improvements are only on the flow map computations, and therefore the FTLE. For the FSLE or the ISLE computations on one single time level and one particular $r$, say $\gamma_r(\x_{i,j},t_0)$, one has to go through all $M$ mesh points in the temporal direction. For a different time level or even a different separation factor $r$, we have to go though the same procedure all over again. We do not aware of many efficient algorithms for the FSLE or the ISLE implementation.

\subsection{An Eulerian method for computing the flow map}
\label{SubSec:oldEulerian}

\begin{figure}[!t]
\begin{center}
(a)\includegraphics[width=5.75cm]{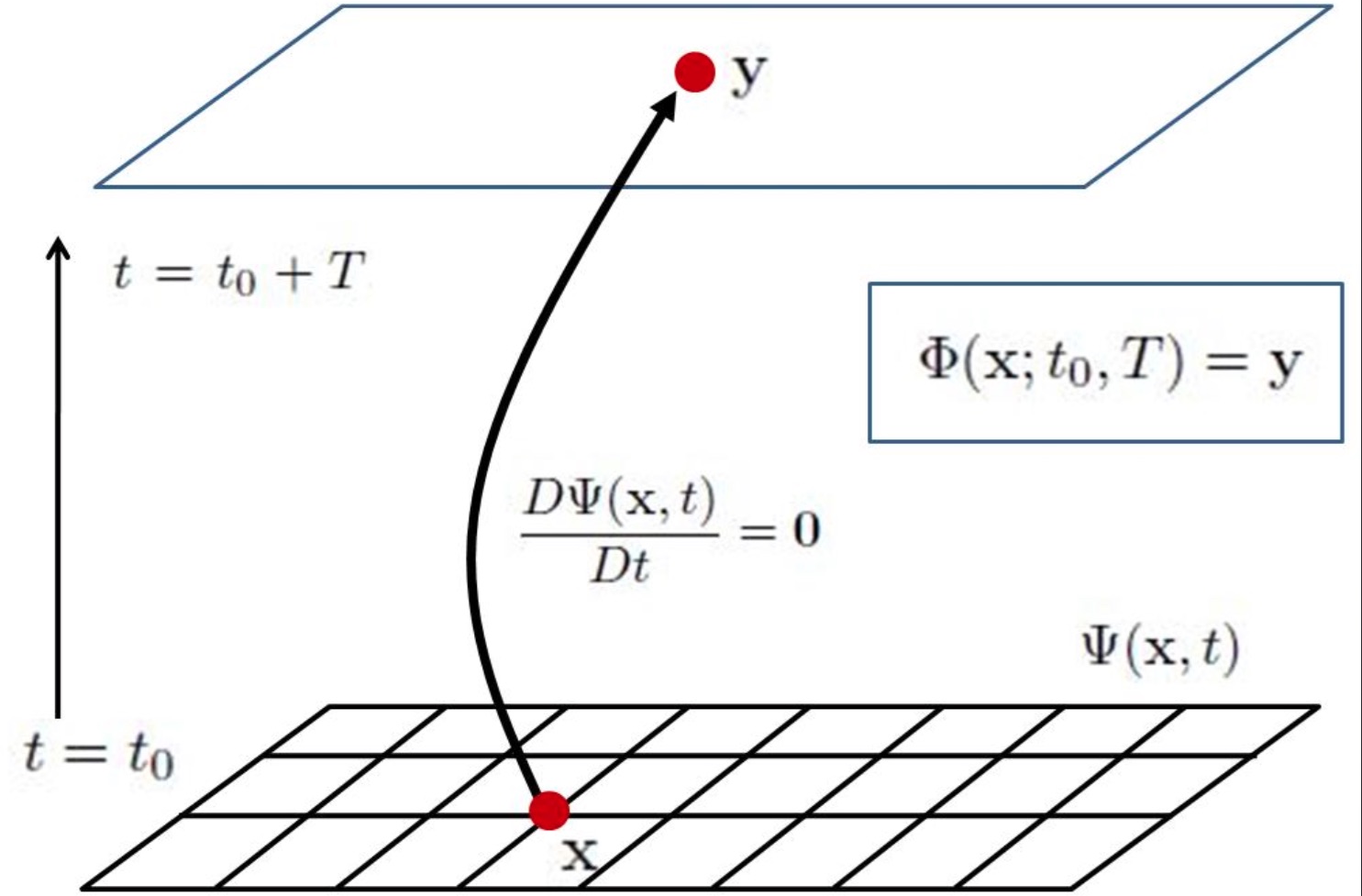}
(b)\includegraphics[width=5.75cm]{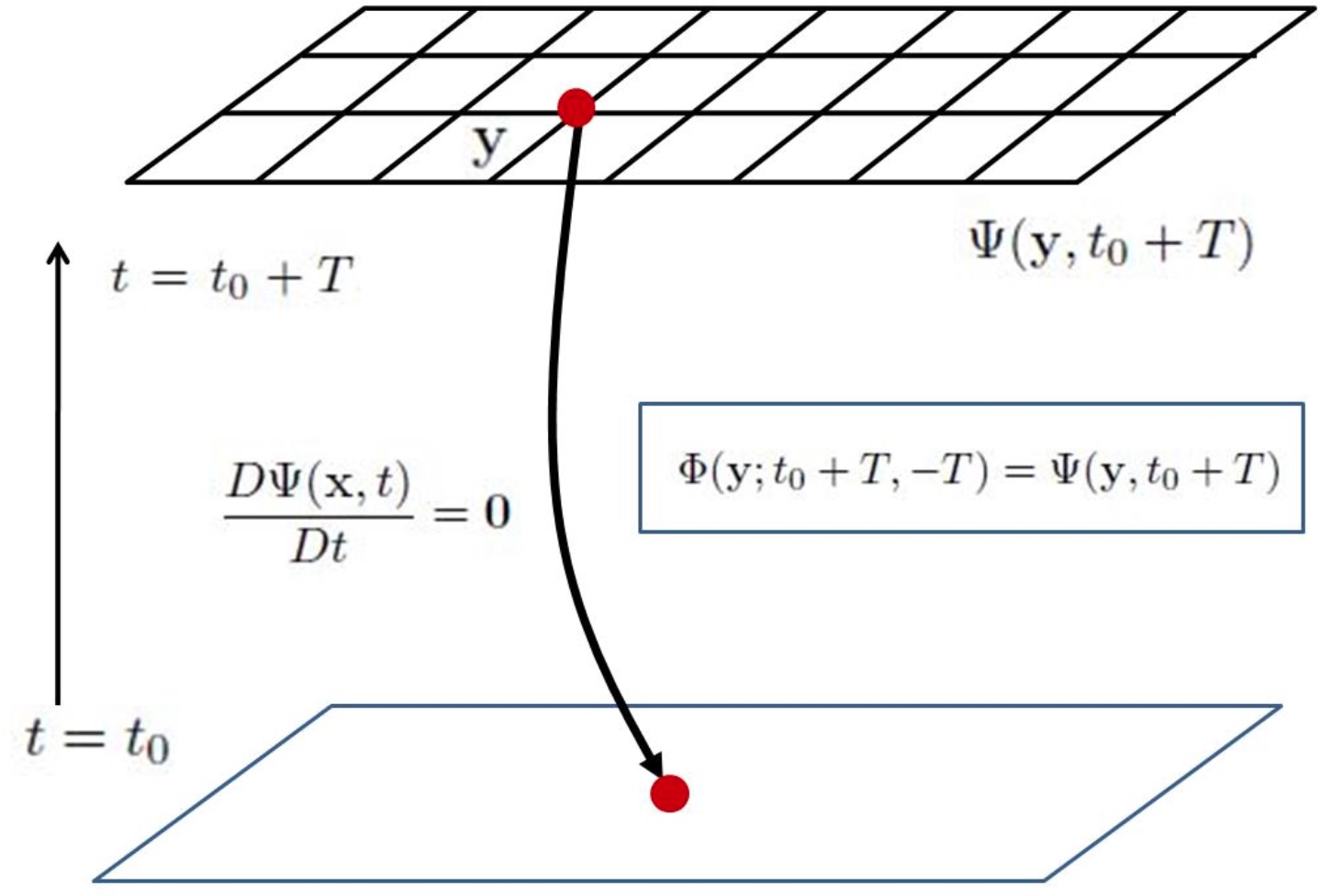}
\end{center}
\caption{Lagrangian and Eulerian interpretations of the function $\Psi$ \cite{leu11}. (a) Lagrangian ray tracing from a given grid location $\mathbf{x}$ at $t=0$. Note that $\mathbf{y}$ might be a non-grid point. (b) Eulerian values of $\Psi$ at a given grid location $\mathbf{y}$ at $t=T$ gives the corresponding take-off location at $t=0$. Note the take-off location might not be a mesh point.}
\label{Fig:ForwardBackward}
\end{figure}

We briefly summarize the Eulerian approach based on the level set method and the Louville equation. We refer interested readers to \cite{leu11} and thereafter. We define a vector-valued function {$\Psi=(\Psi^1,\Psi^2,\cdots,\Psi^d): \Omega \times \mathbb{R} \rightarrow \mathbb{R}^d$.} At $t=0$, we initialize these functions by
\begin{equation}
\Psi(\mathbf{x},0) = \mathbf{x} =(x^1,x^2,\cdots,x^d) \, .
\label{Eqn:LevelSetEquationIC}
\end{equation}
These functions provide a labeling for any particle in the phase space at $t=0$. In particular, any particle initially located at {$(\x,t)=(\mathbf{x}_0,0)=(x_0^1,x_0^2,\cdots,x_0^d,0)$} in the extended phase space can be \textbf{implicitly} represented by the intersection of $d$ {codimension-1 surfaces represented by $\cap_{i=1}^d \{\Psi^i(\mathbf{x},0)=x_0^i\}$ in $\mathbb{R}^d$}. Following the particle trajectory with $\mathbf{x}=\mathbf{x}_0$ as the initial condition in a
given velocity field, {any particle identity should be preserved in the Lagrangian framework and this implies that} the material derivative of these level set functions is zero, i.e.
\begin{equation*}
\frac{D \Psi(\mathbf{x},t)}{Dt} = \mathbf{0} \, .
\end{equation*}
This implies the following level set equations, or the Liouville equations,
\begin{equation}
\frac{\partial \Psi(\mathbf{x},t)}{\partial t} + (\mathbf{u} \cdot \nabla) \Psi(\mathbf{x},t) = \mathbf{0}
\label{Eqn:LevelSetEquation}
\end{equation}
with the initial condition (\ref{Eqn:LevelSetEquationIC}).

{The above \textbf{implicit} representation} embeds all path lines in the extended phase space. For instance, the trajectory of a particle initially located at $(\mathbf{x}_0,0)$ can be found by determining the intersection of $d$ {codimension-1 surfaces represented by $\cap_{i=1}^d \{\Psi^i(\mathbf{x},t)=x_0^i\}$} in the extended phase space. Furthermore, the forward flow map at {a grid location} $\mathbf{x}=\mathbf{x}_0$ from $t=0$ to $t=T$ is given by
$\Phi_0^T(\mathbf{x}_0) = \mathbf{y}$ {where} $\mathbf{y}$ satisfies $\Psi(\mathbf{y},0+T)=\Psi(\mathbf{x}_0,0) \equiv \mathbf{x}_0$. {Note that, in general, $\mathbf{y}$ is a non-mesh location. The typical two dimensional scenario is illustrated in Figure \ref{Fig:ForwardBackward} (a).}

The solution to (\ref{Eqn:LevelSetEquation}) contains much more information than what {was referred to above}. Consider a given mesh location $\mathbf{y}$ in the phase space at {the} time $t=T$, as shown in Figure \ref{Fig:ForwardBackward} (b), i.e. $(\mathbf{y},T)$ in the extended phase space. As discussed in our previous work, these level set functions $\Psi(\mathbf{y},T)$ {defined on a uniform Cartesian mesh} in fact give the {backward} flow map from $t=T$ to $t=0$, i.e.
$
\Phi_T^0(\mathbf{y})=\Psi(\mathbf{y},T)
$.
Moreover, {the} solution to the level set equations (\ref{Eqn:LevelSetEquation}) for $t\in (0,T)$ provides also backward flow maps for all intermediate times, i.e.
$
\Phi_t^0(\mathbf{y})=\Psi(\mathbf{y},t)
$.

To compute the forward flow map, on the other hand, \cite{leu11} has proposed to simply reverse the above process by initializing the level set functions at $t=T$ by
$
\Psi(\mathbf{x},T) = \mathbf{x}
$
and {solving} the corresponding level set equations (\ref{Eqn:LevelSetEquation}) {backward} in time. A typical algorithm of this type is given in {\sf Algorithm 1}. Note that in the original Lagrangian formulation, the {attracting} LCS or the {unstable} manifold is obtained by {backward} time tracing, while the {repelling} LCS or the {stable} manifold is computed by {forward} time integration. In this current Eulerian formulation, on the other hand, {forward} time integration of the Liouville equations gives the {attracting} LCS and {backward} time marching provides the {repelling} LCS.

\pic \\
\vspace{0.5cm} \noindent {\sf Algorithm 1: Computing the {forward} flow map $\Phi_0^T(\mathbf{x})$:
\begin{enumerate}
\item Discretize the computational domain to get $x_i, y_j,t_k$.
\item Initialize the level set functions on the last time level $t=t_N$ by
\begin{eqnarray*}
\Psi^1(x_i,y_j,t_k) &=& x_i \nonumber\\
\Psi^2(x_i,y_j,t_k) &=& y_j \, .
\end{eqnarray*}
\item Solve the Liouville equations for each individual level set function $l=1,2$
$$
\frac{\partial \Psi^l}{\partial t} + (\mathbf{u} \cdot \nabla) {\Psi}^l = 0
$$
from $t=t_{k}$ down to $t=0$ using any well-developed high order numerical methods like WENO5-TVDRK2 \cite{liuoshcha94,shu97,gotshu98} with the boundary conditions
\begin{eqnarray}
\Psi(\mathbf{x},t)|_{\mathbf{x} \in \partial \Omega} = \mathbf{x} \, && \mbox{ if $\mathbf{n} \cdot \mathbf{u} < 0$}
\label{Eqn:Inflow} \\
\mathbf{n} \cdot \nabla \Psi^l(\mathbf{x},t)|_{\mathbf{x} \in \partial \Omega} = 0 \, && \mbox{ if $\mathbf{n} \cdot \mathbf{u} > 0$}
\label{Eqn:Outflow}
\end{eqnarray}
where $\mathbf{n}$ is the outward normal of the boundary.
\item Assign $\Phi_0^T(x_i,y_j)=\Psi(x_i,y_j,0)$.
\end{enumerate}
}
\pic

\section{Our proposed approaches}
\label{Sec:Proposal}

In this section, we introduce two new Eulerian algorithms for flow visualizations. The first algorithm is to improve the Eulerian numerical method for computing the forward flow map. Instead of solving the PDE \textit{backward} in time as in Section \ref{SubSec:oldEulerian} and \cite{leu11}, we develop an Eulerian PDE algorithm to construct the forward flow map from $t=0$ to the final time $t=T$ so that we do not need to load the time-dependent velocity field from the terminal time level backward to the initial time. In other words, we construct the forward flow map \textit{on the fly}. This is computationally more natural and convenient.

\reminder{Because of this simple algorithm, we propose to develop an efficient and systematic PDE based method for computing the ISLE field.} Typical challenge in obtaining such field is that the choice of the separation factor $r$ is in general flow dependent. One has to vary $r$ to extract the necessary information. Usual Lagrangian methods unfortunately require shooting different sets of initial rays for each individual separation factor $r$. This is computationally very inefficient. The proposed Eulerian formulation requires either some simple interpolations or only one single level surface extraction which can be easily implemented using any well-developed contour extraction function such as \textsf{isosurface} in \textsf{MATLAB}.

\subsection{A forward time marching PDE approach for constructing the forward flow map}
\label{SubSec:newEulerian}
One disadvantage about the previous Eulerian approach for forward flow map computation as discussed in Section \ref{SubSec:oldEulerian} and \cite{leu11,leu13,youleu14} is that the level set equation
\begin{equation}
\frac{\partial \Psi(\mathbf{x},t)}{\partial t} + (\mathbf{u} \cdot \nabla) \Psi(\mathbf{x},t) = \mathbf{0}
\label{Eqn:LevelSet}
\end{equation}
has to be first solved \textit{backward} in time from $t=T$ to $t=0$. Once we have the final solution at the time level $t=0$, one can then identity the flow map $\Phi_0^T(\mathbf{x})$ by $\Psi(\mathbf{x},0)$. This could be inconvenient especially when we need to access the intermediate forward flow map. In this section, we propose a new algorithm to construct the forward flow map \textit{on the fly}.

Consider the forward flow map from $t=0$ to $t=T$. Suppose the time domain $[0,T]$ is discretized by $N+1$ discrete points $t_n$, where $t_0=0$ and $t_N=T$. Then on each subinterval $[t_n,t_{n+1}]$ for $n=0,1,\cdots,N-1$, we use the same method as described in Section \ref{SubSec:oldEulerian} to construct the forward flow map $\Phi_{t_n}^{t_{n+1}}$. In particular, we solve the level set equation (\ref{Eqn:LevelSet}) \textit{backward} in time from $t=t_{n+1}$ to $t=t_n$ with the terminal condition $\Psi(\x,t_{n+1})=\x$ imposed on the time level $t=t_{n+1}$. Then the forward flow map is given by $\Phi_{t_n}^{t_{n+1}}(\x)=\Psi(\x,t_{n})$. Once we have obtained this one step \textit{forward} flow map $\Phi_{t_n}^{t_{n+1}}$, the \textit{forward} flow map from $t=0$ to $t=T$ can be obtained using the composition $\Phi_0^{t_{n+1}}=\Phi_{t_n}^{t_{n+1}}\circ \Phi_0^{t_{n}}$. And this can be easily done by typical numerical interpolation methods. To prevent extrapolation, we could enforce $\Phi_{t_n}^{t_{n+1}} \subseteq [ x_{\min}, x_{\max} ] \times [y_{\min},y_{\max}]$.

Here we emphasize that even though we solve equation (\ref{Eqn:LevelSet}) \textit{backward} in time for each subinterval, we access the velocity field data $\mathbf{u}^n$ prior to $\mathbf{u}^{n+1}$ and therefore the forward flow map is indeed obtained \textit{on the fly}. \reminder{As a simple example, we consider the first order Euler method in the temporal direction. The forward flow map $\Phi_{t_n}^{t_{n+1}}$ can be obtained by first solving
$$
\frac{\Psi_{i,j}^{n}-\Psi_{i,j}^{n+1}}{\Delta t}-\mathbf{u}_{i,j}^{n+1}\cdot \nabla\Psi_{i,j}^{n+1} = \mathbf{0} 
$$
with the terminal condition $\Psi_{i,j}^{n+1}=\x_{i,j}$, and then assigning $\Phi_{t_n}^{t_{n+1}}=\Psi_{i,j}^{n}$. 
Higher order generalization is straight-forward.} For example, if we use the TVD-RK2 method in the temporal direction \cite{oshshu91}, we can compute the forward flow map $\Phi_{t_n}^{t_{n+1}}$ by
\begin{eqnarray*}
\frac{\reminder{\hat{\Psi}_{i,j}^{n}}-\Psi_{i,j}^{n+1}}{\Delta t}-\mathbf{u}_{i,j}^{n+1}\cdot \nabla\Psi_{i,j}^{n+1} &=& \mathbf{0} \\
\frac{\reminder{\hat{\Psi}_{i,j}^{n-1}}-\reminder{\hat{\Psi}_{i,j}^{n}}}{\Delta t}-\mathbf{u}_{i,j}^{n}\cdot \nabla \reminder{\hat{\Psi}_{i,j}^{n}} &=&\mathbf{0} \\
\Psi_{i,j}^n=\frac{1}{2} \left( \reminder{\hat{\Psi}_{i,j}^{n-1}}+\reminder{\Psi_{i,j}^{n+1}} \right)
\end{eqnarray*}
with the terminal condition $\Psi_{i,j}^{n+1}=\x_{i,j}$, \reminder{and the functions $\hat{\Psi}_{i,j}^{n-1}$ and $\hat{\Psi}_{i,j}^{n}$ are two intermediate predicted solutions at the time levels $t_{n-1}$ and $t_n$, respectively.} Then the flow map from the time level $t=t_n$ to $t=t_{n+1}$ is given by $\Phi_{t_n}^{t_{n+1}}(\x)=\Psi^n(\x)$.

It is rather natural to decompose the flow map $\Phi_{0}^{T}$ into a composition of maps
$\Phi_{0}^{T} = \Phi_{t_{N-1}}^{t_N} \circ \cdots \circ \Phi_{t_1}^{t_2} \circ \Phi_{0}^{t_1}$. For example, such idea has been used recently in \cite{brurow10} to improve the computational efficiency of the Lagrangian FTLE construction between various time levels. \reminder{Define the interpolation operator by $\mathcal{I}$, which returns the interpolated mesh values from the non-mesh values. Then the flow map $\Phi_{t_0}^{t_N}$ can be decomposed into
$
\Phi_{t_0}^{t_N} = \mathcal{I} \Phi_{t_{N-1}}^{t_N} \circ \cdots \circ \mathcal{I} \Phi_{t_1}^{t_2} \circ \mathcal{I} \Phi_{t_0}^{t_1}$.
If the maps $\Phi_{t_k}^{t_{k+1}}$ for $k=0,\cdots,N-2$ are all stored once they are computed, one can form $\Phi_{t_0}^{t_N}$ by determining only $\Phi_{t_{N-1}}^{t_N}$. } The idea in this work shares some similarities with what we are implementing. Our approach can also re-use all intermediate flow maps, the work \cite{brurow10} however has concentrated only on improving the computational efficiency of the Lagrangian approach. We are using the idea of flow map decomposition to propose a forward computational strategy for the Eulerian formulation.

To obtain a stable evolution in the flow map constructions, theoretically we require the interpolation scheme between two local flow maps to be monotone, i.e. the interpolation scheme should preserve the monotonicity of the given data points \cite{leu13}. This is especially important in the context of the phase flow maps for autonomous flows \cite{canyin06,leu13} since the overshooting due to the interpolation could be significantly amplified in the time doubling strategy
$$
\Phi_0^{t_{2^k}}=\left(\Phi_0^{t_{2^{k-1}}}\right)^2=\left[\left(\Phi_0^{t_{2^{k-2}}}\right)^2\right]^2 \, .
$$

\subsection{An application to ISLE computations}
\label{SubSec:ISLEComp}

In this section, we propose an efficient Eulerian approach to compute the ISLE function for an arbitrary separation factor $r$ based on the techniques developed in the previous section.

To compute the ISLE at $\x$, one has to determine the minimum time $\tau_r(\x)$ at each location $\x$ for which $\sqrt{\lambda_0^{\tau_r(\x)}(\x)}=r$. It is, therefore, necessary to keep track of all intermediate values $\lambda_0^{t_n}(\x)$ for all $n$'s during the flow map construction. At each grid point $\x_{i,j}$ and each intermediate time step $t_n$, we construct a quantity $s_0^{t_n}(\x_{i,j})$ by
$$
s_0^{t_n}(\x_{i,j}) = \max\left[\sqrt{\lambda_0^{t_n}(\x_{i,j})},s_0^{t_{n-1}}(\x_{i,j})\right]
$$
with $s_0^0(\x_{i,j})=0$. Once we determine the flow map $\Phi_0^{t_N}$, we have also constructed an increasing sequence at each grid point $\x_{i,j}$ in time $\{ s_0^{t_n}(\x_{i,j}) : n=0,1,\dots, N \}$, i.e.
$$
0=s_0^0(\x_{i,j}) \le s_0^{t_1}(\x_{i,j}) \le \cdots \le s_0^{t_N}(\x_{i,j})
$$
for each $\x_{i,j}$. Now, we interpret this quantity as a spatial-temporal volumetric data in the $\x-t$ space. Due to such monotonicity property in the temporal direction, the isosurface
$$
\left\{ \left(\x,t^* \right) : s_0^{t^*}(\x)=r \right\}
$$
forms a graph of $\x$, i.e. for each $\x$ we have a unique $t^*=t^*(\x)$ so that $s_0^{t^*(\x)}(\x)=r$. And more importantly, the value $t^*(\x)$ gives the shortest time so that $\sqrt{\lambda_0^{t^*(\x)}(\x)}=r$. This implies that $\tau_r(\x_{i,j})=t^*(\x_{i,j})$ and so the level surface in the extended phase space, i.e.
$$
\left\{ \left(\x,\tau_r^*(\x) \right) : s_0^{\tau_r^*(\x)}(\x)=r \right\} \, ,
$$
can be used to define $\tau_r(\x_{i,j})$. Finally, the ISLE function can then be computed using
$$
\gamma_r(\x_{i,j},0)=\frac{1}{\left| \tau_r(\x_{i,j}) \right|} \ln r \, .
$$
The computational algorithm is summarized in {\sf Algorithm 2}.

\vspace{0.25cm}
\noindent
\pic\\
{\sf Algorithm 2 (Our Proposed Eulerian Method for Computing the ISLE):
\begin{enumerate}
\item Discretize the computational domain
\begin{eqnarray*}
  &&  x_i=x_{\min}+(i-1)\Delta x,\quad \Delta x=\frac{x_{\max}-x_{\min}}{I-1},\quad i=1,2,\cdots,I, \\
  && y_j=y_{\min}+(j-1)\Delta y,\quad \Delta y=\frac{y_{\max}-y_{\min}}{J-1},\quad j=1,2,\cdots,J, \\
  && t_n=n\Delta t,\quad \Delta t=\frac{T}{N},\quad n=0,1,...,N \, .
\end{eqnarray*}
\item Set $s_0^0(\x)=0$.
\item For $n=0,1,\cdots,N-1$,
\begin{enumerate}
\item Solve the level set equation (\ref{Eqn:LevelSet}) for $\Psi_{i,j}^n$ with the condition $\Psi_{i,j}^{n+1}=(x_i,y_j)$.
\item Define $\Phi_{t_n}^{t_{n+1}}=\Psi_{i,j}^n$.
\item Interpolate the solutions to obtain $\Phi_0^{t_{n+1}}=\Phi_{t_n}^{t_{n+1}}\circ \Phi_0^{t_n}$.
\item Compute $\sqrt{\lambda_0^{t_{n+1}}(\x_{i,j})}$ and determine
$$
s_0^{t_{n+1}}(\x_{i,j})=\max \left[ \sqrt{\lambda_0^{t_{n+1}}(\x_{i,j})},s_0^{t_n}(\x_{i,j})\right] \, .
$$
\end{enumerate}
\item  Given an arbitrary separation factor $r$. For each $\x_{i,j}$, search for a time step $t_n$ such that $s_0^{t_n}(\x_{i,j}) \leq r \leq s_0^{t_{n+1}}(\x_{i,j})$. Then determine $\tau_r(\x_{i,j})$ by applying the piecewise linear interpolation using the values $s_0^{t_n}(\x_{i,j})$ and $s_0^{t_{n+1}}(\x_{i,j})$.

The ISLE function is computed as
$$
\gamma_r(\x_{i,j},0)=\frac{1}{\left| \tau_r(\x_{i,j}) \right|}\ln r.
$$
\end{enumerate}

}
\pic \\

There are several major advantages of the proposed Eulerian approach. It provides an efficient yet simple numerical implementation of $\gamma_r(\x,0)$ for multiple separation factors $r$. The construction of the quantity $s_0^t$ can be first done in the background. For each particular given value of $r$, one has to perform only one single isosurface extraction in the extended phase space, i.e. the $\x-t$ space. Unlike the typical Lagrangian implementation where one has to repeatedly keep track of {\it all} particle trajectories for the constraint $s_0^t=r$, the proposed Eulerian algorithm provides a systematic way based on a simple thresholding strategy.

\begin{remark}
Note that in our implementation, if it happens that $s_0^{t_n}(x_i,y_j)< r$ for all $t_n$, we simply set $\gamma_r(x_i,y_j,0)=0$.
\end{remark}

\subsection{Computational complexities}

We conclude this section by discussing the computational complexity of our proposed algorithm for a two dimensional flow, i.e. $d=2$. Let $N$ and $M$ be the discretization size of one spatial dimension and time dimension respectively. Since the Liouville equation is a hyperbolic equation, we have $M=O(N)$ by the CFL condition. At each time step $t_n$, a short time flow map $\Phi_{t_n}^{t_{n+1}}$ is obtained by solving the Liouville equations from $t_{n+1}$ to $t_{n}$, the computational effort is $O(N^2)$. Then in computing the long time flow map $\Phi_0^{t_{n+1}}$, an interpolation $\Phi_0^{t_{n+1}} = \Phi_{t_n}^{t_{n+1}} \circ \Phi_0^{t_n}$ takes $O(N^2)$ operations. Therefore the construction of flow map at each time step requires $O(N^2)+O(N^2)=O(N^2)$ operations. Computing $\sqrt{\lambda_0^{t_n}(\x_{i,j})}$ and $s_0^{t_{n+1}}(\x_{i,j})$ also needs $O(N^2)$ operations, the complexity order is kept at $O(N^2)$. Summing up this procedure in all time steps, the overall computational complexity is $M \cdot O(N^2)=O(N^3)$. 

\reminder{This new Eulerian method has significantly improved the overall computational complexity in the application to the ISLE computations. Since the calculations involve the forward flow map from $t=t_0$ to $t=t_i$ for all $i=1,2,\cdots,M$, the original Eulerian approach as discussed in Section \ref{SubSec:oldEulerian} requires to solve the Liouville equation from each individual time level $t=t_i$ backward in time to $t=t_0$. The overall computational complexity in obtaining all flow maps is therefore given by $O(M^2N^2)=O(N^4)$ which is one order magnitude larger than the newly developed Eulerian method.}

\section{A relationship between the FTLE and the ISLE}
\label{Sec:Relation}

\reminder{With algorithms proposed in Section \ref{Sec:Proposal}, we can finally develop an efficient numerical approach to compute the \textit{forward} FTLE field by solving corresponding PDEs \textit{forward} in time and the ISLE field for arbitrary separation factor $r$.} The FTLE and the ISLE (or the FSLE) are indeed measuring different properties of a given flow, even though both quantities depend on the same function ${\lambda_0^t(\x)}$ which relates the growth of an infinitesimal perturbation. In particular, FTLE measures how much ${\lambda_0^t(\x)}$ grows over a finite time span $[0,t]$, while the ISLE measures the time required for the quantity ${\lambda_0^t(\x)}$ to reach a threshold value prescribed by the separation factor. They are two different tools to study how rapid ${\lambda_0^t(\x)}$ grows. A careful comparison of these two quantities can be found in, for example, \cite{ppss14}.

\reminder{In many numerical experiences, on the other hand, the FTLE fields can show striking visual resemblance with the ISLE fields of certain separation factors. However, there has not been much studies on the theoretical relationship between these two quantities.} In this section, we are going to demonstrate how these two quantities relate to each other by considering their corresponding {\it ridges}. Of course, the way to define a {\it ridge} is not unique and one can pick a convenient definition for a particular application \cite{spft11,allpea15}. In this work, we adapt the following simple definition.

\begin{defn}
For a given scalar function $f(\x)$, we define a $f$-ridge to be a codimension-one compact and $C^1$ surface, denoted by $\mathcal{M}$, such that for every point $\x \in \mathcal{M}$, $f(\x)$ is a local maximum along the normal direction to $\mathcal{M}$ at $\x$.
\end{defn}


At these locations, $f(\x)$ is also called a generalized maximum in some literatures. Next, we define a particular neighbourhood of a ridge for later discussion.

\begin{defn}
Suppose $\mathcal{S}$ is a codimension-one, compact and smooth surface. We define $\Gamma_\mathcal{S}(\rho)$, the tubular neighbourhood of $\mathcal{S}$ with radius $\rho>0$, as the collection of any point $\y$ that can be uniquely expressed as $\x+s \n_{\x}$, for some point $\x \in \mathcal{S}$, a real number $s \in (-\rho,\rho)$, and $\n_{\x}$ is an unit normal vector of $\mathcal{S}$ at $\x$.
\end{defn}

\begin{remark}
To ensure that any point $\y$ on $\Gamma_\mathcal{S}(\rho)$ can be uniquely expressed as such, one can equivalently require that the boundary of $\Gamma_\mathcal{S}(\rho)$ does not intersect itself. For a smooth surface, this can always be guaranteed by choosing a small enough positive number $\rho$.
\end{remark}

The definitions of both the FTLE and the ISLE involve $\lambda_0^t(\x)$. It is therefore natural to link them through this particular quantity. The following simple \reminder{theorem} which identifies the $\sigma_0^t$-ridges with the $\lambda_0^t$-ridges, will be the very first step in building up our analysis.

\begin{thm}
\label{prop:FTLE&lambda}
Let $\sigma_0^t$ and $\lambda_0^t$ be the FTLE function and the largest eigenvalue of the Cauchy-Green deformation tensor from time $0$ to time $t$, respectively. Then, a $\sigma_0^t$-ridge is also a $\lambda_0^t$-ridge.
\end{thm}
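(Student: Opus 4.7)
The plan is to reduce the claim to the elementary observation that $\sigma_0^t$ is, at fixed $t$, a strictly monotone increasing function of $\lambda_0^t$, and then invoke the fact that strictly increasing functions preserve local maxima along any fixed direction. More precisely, from (\ref{Eqn:FTLE}) we have the pointwise identity
\[
\sigma_0^t(\mathbf{x}) \;=\; g\!\left(\lambda_0^t(\mathbf{x})\right), \qquad g(\lambda) := \frac{1}{2|t|}\ln \lambda,
\]
which holds for every $\mathbf{x}$ in the domain, since $C_0^t(\mathbf{x})$ is symmetric positive-definite (so $\lambda_0^t(\mathbf{x})>0$) whenever the flow map is a diffeomorphism, a consequence of the Lipschitz assumption on $\mathbf{u}$. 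Note $g\in C^\infty((0,\infty))$ with $g'(\lambda)=1/(2|t|\lambda)>0$.

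Next I would fix a $\sigma_0^t$-ridge $\mathcal{M}$ as in the definition, pick an arbitrary $\mathbf{x}\in\mathcal{M}$, and let $\mathbf{n}_\mathbf{x}$ be a unit normal to $\mathcal{M}$ at $\mathbf{x}$. Define the two one-variable restrictions
\[
h(s) \;:=\; \sigma_0^t(\mathbf{x}+s\,\mathbf{n}_\mathbf{x}), \qquad H(s) \;:=\; \lambda_0^t(\mathbf{x}+s\,\mathbf{n}_\mathbf{x}),
\]
both defined on a small interval $s\in(-\rho,\rho)$ (which is well-posed because $\mathcal{M}$ admits a tubular neighbourhood of some positive radius). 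By the definition of ridge, $h$ attains a local maximum at $s=0$. Since $h=g\circ H$ with $g$ strictly increasing on $(0,\infty)$, the composition preserves local maxima: if $h(0)\ge h(s)$ for all small $s$, then $g(H(0))\ge g(H(s))$, and applying $g^{-1}$ (which is also strictly increasing) gives $H(0)\ge H(s)$. Hence $H$ attains a local maximum at $s=0$, meaning $\lambda_0^t$ has a local maximum at $\mathbf{x}$ along $\mathbf{n}_\mathbf{x}$.

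Since $\mathbf{x}\in\mathcal{M}$ was arbitrary and the underlying surface $\mathcal{M}$ is, by hypothesis, already codimension-one, compact and $C^1$ (properties of $\mathcal{M}$ that have nothing to do with which scalar we monitor on it), the same $\mathcal{M}$ satisfies the definition of a $\lambda_0^t$-ridge. There is essentially no obstacle here beyond confirming $\lambda_0^t>0$ so that $g$ is well-defined and strictly monotone on the relevant range; the converse direction (every $\lambda_0^t$-ridge is a $\sigma_0^t$-ridge) would follow by the same argument applied to $g^{-1}$, which I would mention as a remark although the statement only asks for one direction.
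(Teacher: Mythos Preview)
Your proof is correct and follows exactly the same approach as the paper: both arguments rest on the observation that $\sigma_0^t=\frac{1}{2|t|}\ln\lambda_0^t$ is a strictly increasing transformation of $\lambda_0^t$, so generalized maxima along any normal direction are preserved. The paper states this in a single sentence, while you have spelled out the one-variable restriction and the monotonicity step explicitly, but there is no substantive difference.
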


\begin{proof}
Since $(2|t|)^{-1}$ is a just a scaling factor and the natural logarithm function is strictly increasing, all generalized maxima of $\sigma_0^t$ as defined in (\ref{Eqn:FTLE}) and $\lambda_0^t$ coincide.
\end{proof}

For convenience, \reminder{in the time window $[0,t]$, we impose $\gamma_r(\x,0)=0$ if $\sqrt{\lambda_0^s(\mathbf{x})}<r$ for all $s \in [0,t]$}. If an ISLE function $\gamma_r(\x,0) \equiv 0$ on an open set $\Omega$ of the spatial domain, there is obviously no visible ISLE feature on $\Omega$, corresponding to this separation factor $r$. The following \reminder{theorem} establishes a fundamental result that for certain choices of $r$, the ISLE function $\gamma_r$ can be positive on a tubular neighbourhood of a $\lambda_0^t$-ridge. This explains the non-trivial ISLE features around the location of a FTLE ridge, when one chooses a suitable separation factor $r$.

\begin{thm}
\label{prop:funProp1}
Let $\lambda_0^t$ be the largest eigenvalue of the Cauchy-Green deformation tensor from time $0$ to time $t$. Suppose $\lambda_0^t$ is continuous everywhere in the spatial domain, and $\mathcal{M}$ is a $\lambda_0^t$-ridge with $m=\min\limits_{\x \in \mathcal{M}}\sqrt{\lambda_0^t(\x)} > 0$. Then for any positive number $r \in (0,m)$, there is a tubular neighbourhood $\Gamma_\mathcal{M}(\rho)$ such that the ISLE function is positive on $\Gamma_\mathcal{M}(\rho)$.
\end{thm}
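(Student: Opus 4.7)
The plan is to show that the hitting time $\tau_r(\x)$ in the ISLE definition (\ref{Eqn:ISLE}) is finite on some tubular neighborhood of $\mathcal{M}$, which immediately gives positivity of $\gamma_r = \ln r / |\tau_r|$ under the standard separation-factor convention $r>1$ that is implicit in the word ``positive'' (if $r \le 1$, the conclusion should be interpreted as $\gamma_r$ nonzero, i.e.\ not set to the default value $0$). The argument splits cleanly into a spatial step that builds the tube where $\sqrt{\lambda_0^t(\cdot)}>r$ at the terminal time, and a temporal step that uses an intermediate value argument along the curve $s \mapsto \sqrt{\lambda_0^s(\x)}$.

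For the spatial step, the hypothesis $r<m$ together with spatial continuity of $\lambda_0^t$ implies that $U = \{\x : \sqrt{\lambda_0^t(\x)} > r\}$ is an open neighborhood of $\mathcal{M}$. Because $\mathcal{M}$ is a compact codimension-one $C^1$ surface, the tubular neighborhood theorem provides a radius $\rho>0$ small enough that (i) $\Gamma_\mathcal{M}(\rho)$ is a well-defined embedded tube in the sense of the preceding definition (unique normal expression $\y = \x + s\n_\x$), and (ii) $\Gamma_\mathcal{M}(\rho) \subset U$. Compactness of $\mathcal{M}$ is what upgrades the pointwise existence of a normal neighborhood to a uniform $\rho$.

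For the temporal step, I would exploit the identity $C_0^0(\x) = I$, which forces $\sqrt{\lambda_0^0(\x)} = 1 < r$. For each $\y \in \Gamma_\mathcal{M}(\rho) \subset U$, the continuous curve $s \mapsto \sqrt{\lambda_0^s(\y)}$ then starts strictly below $r$ at $s=0$ and lies strictly above $r$ at $s=t$; the intermediate value theorem produces a smallest crossing time $\tau_r(\y) \in (0,t)$. This is precisely the ISLE hitting time at $\y$, so $\gamma_r(\y,0) = \ln r / \tau_r(\y) > 0$ throughout $\Gamma_\mathcal{M}(\rho)$.

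The main obstacle I anticipate is the temporal continuity of $s \mapsto \lambda_0^s(\y)$: the hypothesis only asserts continuity of $\lambda_0^t$ in $\x$ at the fixed terminal time $t$. This has to be extracted from regularity of the flow. Since $\bu$ is Lipschitz, the forward flow map $\Phi_0^s(\y)$ is continuous in $s$ and $C^1$ in $\y$, so the Cauchy--Green tensor $C_0^s(\y) = [\nabla \Phi_0^s(\y)]^*\nabla \Phi_0^s(\y)$ varies continuously in $s$; continuity of the largest eigenvalue of a continuously varying symmetric matrix then closes the argument. Everything else is a routine combination of compactness, continuity, and the intermediate value theorem.
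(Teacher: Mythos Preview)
Your proposal is correct and follows essentially the same approach as the paper: build a tubular neighborhood on which $\sqrt{\lambda_0^t}>r$ via spatial continuity and compactness of $\mathcal{M}$, then invoke the intermediate value theorem in the time variable to produce a finite $\tau_r$. The paper packages the spatial step as an explicit finite-subcover argument rather than citing the tubular neighborhood theorem, and it simply asserts the temporal continuity that you take care to justify from the Lipschitz hypothesis on $\bu$; your remark about the implicit convention $r>1$ is also well taken, since the paper's own proof silently relies on $\sqrt{\lambda_0^0}=1<r$ for the intermediate value step.
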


\begin{proof}
For each $\x \in \mathcal{M}$, using the definition of $m$, we have $\sqrt{\lambda_0^t(\x)} \geq m > r$. Then by the continuity of $\lambda_0^t$ at $\x$, we can find an open ball  $B(\x,\rho_\x)$ with center $\x$ and radius $\rho_\x$ such that $\sqrt{\lambda_0^t(\y)} \geq r$ for all $\y \in B(\x,\rho_\x)$. Collect all these open balls at every point on $\mathcal{M}$ into  $\mathcal{F}$, which is clearly an open cover of $\mathcal{M}$. By the compactness of $\mathcal{M}$, we can find a finite subcover $\mathcal{F}'=\{B(\x_1,\rho_{\x_1}),\dots,B(\x_n,\rho_{\x_n})\}$ from $\mathcal{F}$. Take $\rho=\min\limits_{1 \leq i \leq n}\rho_{\x_i}$. Without loss of generality, assume $\rho$ is small enough such that the tubular neighbourhood $\Gamma_\mathcal{M}(\rho)$ is contained in the union of sets in $\mathcal{F}'$. Now for any $\y \in \Gamma_\mathcal{M}(\rho)$, it is clear that $\sqrt{\lambda_0^{t}(\y)} \geq r$. \reminder{Because the function $\sqrt{\lambda_0^{\tau}(\y)}$ is a continuous function of $\tau$, we apply the Intermediate Value Theorem and conclude that it} must have reached $r$ at some $\tau=\tau_r(\y) \in (0,t]$. Therefore $\gamma_r(\y,0)$ is positive.
\end{proof}

The next \reminder{theorem} is to identify a $\lambda_0^t$-ridge with a $\gamma_r$-ridge of a certain ISLE function $\gamma_r$.  To justify the condition, the points on the location of a $\lambda_0^t$-ridge, are expected to exhibit highly chaotic behavior. It is reasonable to assume that we can find a tubular neighbourhood of a $\lambda_0^t$-ridge, where the time monotonicity of $\lambda_0^{\tau}$ holds for $\tau \in [0,t]$, and that no other points reach the minimum value of $\lambda_0$, except those on $\mathcal{M}$.

\begin{thm}
\label{prop:funProp2}
Let $\lambda_0^t$ be the largest eigenvalue of the Cauchy-Green deformation tensor from time $0$ to time $t$. Suppose $\mathcal{M}$ is a $\lambda_0^t$-ridge with $m=\min\limits_{\x \in \mathcal{M}}\sqrt{\lambda_0^t(\x)}>0$. If there exists a tubular neighbourhood $\Gamma_\mathcal{M}(\rho)$ of $\mathcal{M}$ such that
\begin{enumerate}
\item for any fixed $\y \in \Gamma_\mathcal{M}(\rho)$, the quantity $\lambda_0^{\tau}(\y)$ is increasing in $\tau \in [0,t]$, and
\item $\{\x \in \Gamma_\mathcal{M}(\rho) : \lambda_0^t(\x) \geq m^2 \} = \mathcal{M}$,
\end{enumerate}
then $\mathcal{M}$ is also a $\gamma_m$-ridge for the ISLE function $\gamma_m$.
\end{thm}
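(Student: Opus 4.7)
The plan is to exploit the two hypotheses to establish a sharp dichotomy on $\Gamma_\mathcal{M}(\rho)$: on $\mathcal{M}$ itself, $\gamma_m$ is strictly positive, while on $\Gamma_\mathcal{M}(\rho) \setminus \mathcal{M}$, $\gamma_m$ is identically zero by the adopted convention. Since $\mathcal{M}$ already has the codimension-one compact $C^1$ regularity demanded by the ridge definition (inherited from its status as a $\lambda_0^t$-ridge), establishing this dichotomy will yield the conclusion immediately.

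First I would handle the off-ridge case. For any $\y \in \Gamma_\mathcal{M}(\rho) \setminus \mathcal{M}$, hypothesis (2) gives $\lambda_0^t(\y) < m^2$, i.e.\ $\sqrt{\lambda_0^t(\y)} < m$. Combining this with the temporal monotonicity in hypothesis (1), we get $\sqrt{\lambda_0^\tau(\y)} \leq \sqrt{\lambda_0^t(\y)} < m$ for every $\tau \in [0,t]$. Hence the separation threshold $r = m$ is never attained on $[0,t]$, and the convention stated just before the theorem forces $\gamma_m(\y,0) = 0$.

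Next I would handle the on-ridge case. For any $\x \in \mathcal{M}$, the definition of $m$ yields $\sqrt{\lambda_0^t(\x)} \geq m$. Since $\Phi_0^0$ is the identity, $\sqrt{\lambda_0^0(\x)} = 1$, and assuming the standard ISLE setting $m > 1$, continuity of $\tau \mapsto \sqrt{\lambda_0^\tau(\x)}$ together with the Intermediate Value Theorem (exactly as used in Theorem \ref{prop:funProp1}) produces a smallest $\tau_m(\x) \in (0,t]$ realizing $\sqrt{\lambda_0^{\tau_m(\x)}(\x)} = m$. Consequently $\gamma_m(\x,0) = \ln m / |\tau_m(\x)| > 0$.

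To conclude, I would verify the ridge condition pointwise: given $\x \in \mathcal{M}$ with unit normal $\n_\x$, every $s \in (-\rho,\rho) \setminus \{0\}$ gives $\x + s \n_\x \in \Gamma_\mathcal{M}(\rho) \setminus \mathcal{M}$ by the unique-representation property of the tubular neighbourhood, so $\gamma_m(\x + s\n_\x, 0) = 0 < \gamma_m(\x,0)$. Thus $\gamma_m$ attains a strict local maximum at $\x$ along $\n_\x$, and $\mathcal{M}$ is a $\gamma_m$-ridge. The subtle points to be careful about are twofold: (i) the implicit use of $m > 1$ to ensure $\tau_m(\x)$ exists on every point of $\mathcal{M}$ (including the minimizer of $\sqrt{\lambda_0^t}$ on $\mathcal{M}$, where $\tau_m(\x) = t$), and (ii) the requirement that $\rho$ be small enough for $\Gamma_\mathcal{M}(\rho)$ to be a genuine tubular neighbourhood so the normal-line argument is valid; both are covered by the hypotheses and the accompanying remark.
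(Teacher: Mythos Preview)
Your proposal is correct and follows essentially the same approach as the paper: show that $\gamma_m$ vanishes off $\mathcal{M}$ within the tubular neighbourhood (via hypothesis~(2) plus monotonicity), and conclude that each $\x\in\mathcal{M}$ is a generalized maximum along the normal direction. In fact your write-up is more complete than the paper's, which omits the explicit verification that $\gamma_m(\x,0)>0$ on $\mathcal{M}$ and the tacit requirement $m>1$; both of these you flag correctly.
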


\begin{proof}
Let $\x \in \mathcal{M}$, and $\y \in \Gamma_\mathcal{M}(\rho)-\{\x\}$ be a point in the normal direction of $\mathcal{M}$ at $\x$. By the second condition, we have $\sqrt{\lambda_0^t(\y)} < m$. And since $\lambda_0^{\tau}(\y)$ is increasing with $\tau$, it follows that $\sqrt{\lambda_0^{\tau}(\y)} <m$ for $\tau \in [0,t]$. Therefore $\gamma_m(\y,0)=0$ by our convention. We can conclude that no other points on the normal direction of $\mathcal{M}$ at $\x$ has positive value of $\gamma_m$, except $\x$ itself. This makes $\x$ a generalized maximum of $\gamma_m$.
\end{proof}

\reminder{Theorem} \ref{prop:funProp2} is a restricted result, in the sense that the whole $\lambda_0^t$-ridge is preserved for a particular $\gamma_r$ function. But it is straightforward to extend it to the following corollary, which looks for a certain portion of a $\lambda_0^t$-ridge that can be conserved in a larger class of ISLE functions.

\begin{cor}
\label{prop:funCor}
Let $\lambda_0^t$ be the largest eigenvalue of the Cauchy-Green deformation tensor from time $0$ to time $t$. Suppose $\mathcal{M}$ is a $\lambda_0^t$-ridge with $m=\min\limits_{\x \in \mathcal{M}}\sqrt{\lambda_0^t(\x)}>0$, and $\mathcal{M}'$ is a connected subset of $\mathcal{M}$ with $m'=\min\limits_{\x \in \mathcal{M}'}\sqrt{\lambda_0^t(\x)}>0$. For any positive number $r \in [m,m']$, if there exists a tubular neighbourhood $\Gamma_{\mathcal{M}'}(\rho)$ of $\mathcal{M}'$ such that
\begin{enumerate}
\item for any fixed $\y \in \Gamma_{\mathcal{M}'}(\rho)$, the quantity $\lambda_0^{\tau}(\y)$ is increasing in $\tau \in [0,t]$, and
\item $\{\x \in \Gamma_{\mathcal{M}'}(\rho) : \lambda_0^t(\x) \geq r^2 \} = \mathcal{M}'$,
\end{enumerate}
then $\mathcal{M}'$ is a $\gamma_r$-ridge for the ISLE function $\gamma_r$.
\end{cor}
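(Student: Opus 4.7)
\noindent\textbf{Proof proposal for Corollary \ref{prop:funCor}.}
The plan is to mimic the argument of Theorem \ref{prop:funProp2} almost verbatim, but localized to the connected piece $\mathcal{M}'$ and with the threshold $r$ replacing $m$. The two assumptions in the corollary are precisely the analogues, at level $r$ and on the tube $\Gamma_{\mathcal{M}'}(\rho)$, of the two assumptions used in the theorem. So the main task is to verify that along every normal line to $\mathcal{M}'$, the ISLE function $\gamma_r(\cdot,0)$ is strictly maximized at the base point.

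First, I would confirm that $\gamma_r$ is positive on $\mathcal{M}'$. Pick any $\x \in \mathcal{M}'$. Since $r \leq m'$, one has $\sqrt{\lambda_0^t(\x)} \geq m' \geq r$, and because of the time monotonicity assumption together with $\lambda_0^0(\x)=1 < r^2$ (or, more cleanly, because $\sqrt{\lambda_0^{\tau}(\x)}$ depends continuously on $\tau$ and passes from a value below $r$ to a value at least $r$), the Intermediate Value Theorem produces some finite $\tau_r(\x) \in (0,t]$ with $\sqrt{\lambda_0^{\tau_r(\x)}(\x)} = r$. Consequently $\gamma_r(\x,0) = \ln r / |\tau_r(\x)| > 0$.

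Next, I would show that $\gamma_r$ vanishes on the rest of every normal slice inside $\Gamma_{\mathcal{M}'}(\rho)$. Fix $\x \in \mathcal{M}'$ and any $\y \in \Gamma_{\mathcal{M}'}(\rho)\setminus\{\x\}$ lying on the normal line to $\mathcal{M}'$ at $\x$. Condition 2 of the corollary forces $\lambda_0^t(\y) < r^2$, since otherwise $\y$ would belong to $\mathcal{M}'$, contradicting $\y \ne \x$ and the uniqueness of the normal representation of points in a sufficiently thin tubular neighbourhood. By the time monotonicity in condition 1,
$$
\sqrt{\lambda_0^\tau(\y)} \leq \sqrt{\lambda_0^t(\y)} < r \quad \text{for all } \tau \in [0,t],
$$
so the threshold $r$ is never reached in the time window $[0,t]$, and our convention gives $\gamma_r(\y,0)=0$. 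Combined with the previous paragraph, every $\x \in \mathcal{M}'$ is a strict local maximum of $\gamma_r$ along the normal direction, which is exactly the definition of a $\gamma_r$-ridge.

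The only step I expect to need minor care is the tubular-neighbourhood bookkeeping: we must ensure that the normal representation $\y = \x + s\n_{\x}$ is unique on $\Gamma_{\mathcal{M}'}(\rho)$ and that $\mathcal{M}'$ inherits enough regularity (codimension-one, $C^1$, and compact) from $\mathcal{M}$ to qualify as a ridge in its own right. Compactness follows if we take $\mathcal{M}'$ to be a closed connected subset of the compact surface $\mathcal{M}$ (which is the natural reading of the hypothesis, since otherwise $m' = \min_{\x\in\mathcal{M}'}\sqrt{\lambda_0^t(\x)}$ need not be attained), and the uniqueness of the normal representation is guaranteed by choosing $\rho$ small, as already noted in the remark following the definition of $\Gamma_{\mathcal{S}}(\rho)$. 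Once these technical points are in place, the argument above carries through unchanged.
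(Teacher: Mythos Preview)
Your proposal is correct and follows essentially the same approach as the paper: the paper's proof is a single sentence stating that the argument is parallel to that of Theorem~\ref{prop:funProp2}, with $\mathcal{M}$ and $m$ replaced by $\mathcal{M}'$ and $r$, which is precisely what you carry out. Your write-up is in fact more careful than the paper's, as you explicitly verify $\gamma_r(\x,0)>0$ on $\mathcal{M}'$ and flag the regularity/compactness issues for $\mathcal{M}'$, points the paper leaves implicit.
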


\begin{proof}
The proof is parallel to that of \reminder{Theorem} \ref{prop:funProp2}, with $\mathcal{M}$ and $m$ being substituted by $\mathcal{M}'$ and $r$, respectively.
\end{proof}


Through \reminder{Theorem} \ref{prop:funProp1} to Corollary \ref{prop:funCor}, we have obtained a close relationship between the $\lambda_0^t$-ridges and the ISLE ridges. By the equivalence of the FTLE ridges and the $\lambda_0^t$-ridges as proved in \reminder{Theorem} \ref{prop:FTLE&lambda}, we can then link the FTLE ridges and the ISLE ridges. In practice, the estimated location of the ISLE ridges can be inferred by the location of the FTLE ridges, while $\lambda_0^t$ plays an important role for estimating a suitable separation factor defined in the ISLE ridge. To demonstrate this, suppose that there is a FTLE ridge $\mathcal{M}$ satisfying all conditions in \reminder{Theorem} \ref{prop:funProp2}. Then $\mathcal{M}$ is an ISLE ridge with the separation factor $m=\min\limits_{\x \in \mathcal{M}}\sqrt{\lambda_0^t(\x)}$. Although the value of $m$ is actually an unknown, we can determine a number $l$ that is slightly less than $\min\limits_{\x \in \mathcal{M}}\sigma_0^t(\x)$ manually. From the definition of the FTLE, we have
$$
m=\min\limits_{\x \in \mathcal{M}}\sqrt{\lambda_0^t(\x)} = \exp\left[{t\min\limits_{\x \in \mathcal{M}}\sigma_0^t(\x)}\right] > e^{lt} \, .
$$

Now, with any separation factor $r\in[e^{lt},m)$, \reminder{Theorem} \ref{prop:funProp1} implies that there is a tubular neighborhood around $\mathcal{M}$ with positive ISLE values. We can then increase the separation factor from $e^{lt}$ approaching to $m$, so that the tubular neighborhood containing $\mathcal{M}$ becomes narrower, and thus leads to a better approximation of $\mathcal{M}$. \reminder{In practice, the value of $l$ has to be chosen in a trial-by-error fashion and has to be determined case-by-case. Based on the Eulerian algorithm we developed in Section \ref{SubSec:newEulerian} and Section \ref{SubSec:ISLEComp}, we are now able to efficiently construct the ISLE field of each individual separation factor using one single isocontour extraction.}

\section{Numerical examples}
\label{Sec:Examples}

\reminder{In this section, we present numerical results to show the feasibility of the proposed Eulerian approach. We will compare the solution from the new approach with the previous Eulerian approach discussed in Section \ref{SubSec:oldEulerian}. It is noteworthy that for a dynamical system in a fixed time window, we can now construct the ISLE function with an arbitrary choice of the separation factor in only one single numerical flow simulation. Therefore, we can easily verify the theoretical connection between FTLE and ISLE, developed in Section \ref{Sec:Relation}.}

\subsection{The double gyre flow}
\label{SubSec:Example_Double_Gyre}

This first example is a simple flow taken from \cite{shalekmar05} to describe a periodically varying double-gyre. The flow is modeled by the following stream-function
$$
\psi(x,y,t)=A \sin[ \pi g(x,t) ] \sin(\pi y) \, ,
$$
where
\begin{eqnarray*}
g(x,t) &=& a(t) x^2 + b(t) x \, , \nonumber\\
a(t) &=& \epsilon \sin(\omega t) \, , \nonumber\\
b(t) &=& 1- 2\epsilon \sin(\omega t) \, .
\end{eqnarray*}
In this example, we follow \cite{shalekmar05} and use $A=0.1$, $\omega=2\pi/10$. We discretize the domain $[0,2]\times[0,1]$ using 513 grid points in the $x$-direction and 257 grid points in the $y$-direction. This gives $\Delta x=\Delta y=1/256$.

\begin{figure}[!htb]
\centering{
\includegraphics[width=12cm]{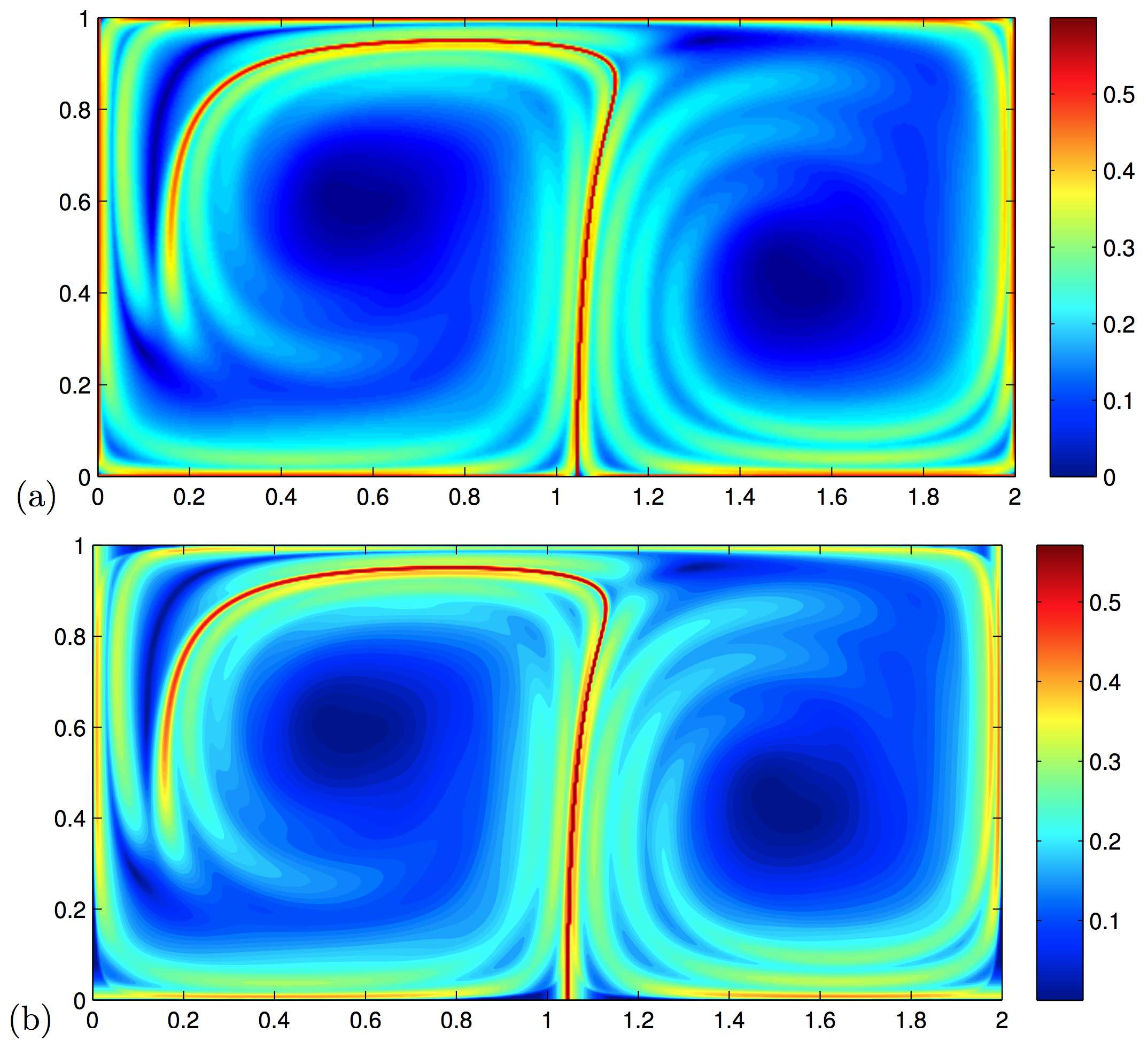}
}
\caption{(Section \ref{SubSec:Example_Double_Gyre}) The FTLE field $\sigma_0^{10}(\mathbf{x})$ computed using (a) the Lagrangian approach and (b) our proposed Eulerian approach in Section \ref{SubSec:newEulerian}.}
\label{Fig:FTLE}
\end{figure}

\begin{figure}[!htb]
\centering{
\includegraphics[width=8.5cm]{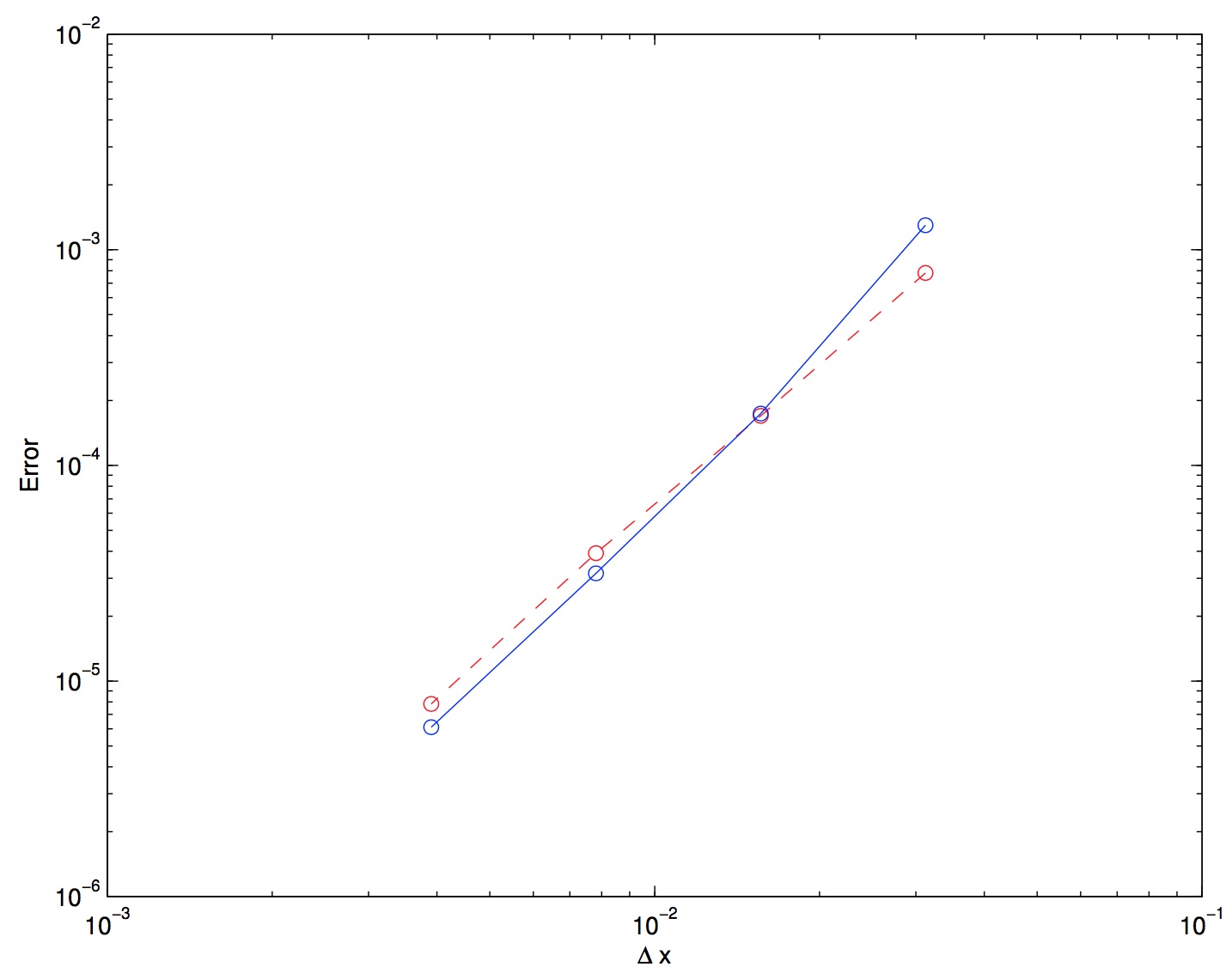}
}
\caption{\reminder{(Section \ref{SubSec:Example_Double_Gyre}) $L_2$-errors in both $\phi(x,y)$ (blue solid line) and $\psi(x,y)$ (red dash line) are computed using different $\Delta x$'s. The slope of these two lines is approximately 2.}}
\label{Fig:convergence}
\end{figure}

\begin{figure}[!htb]
\centering{
\includegraphics[width=12cm]{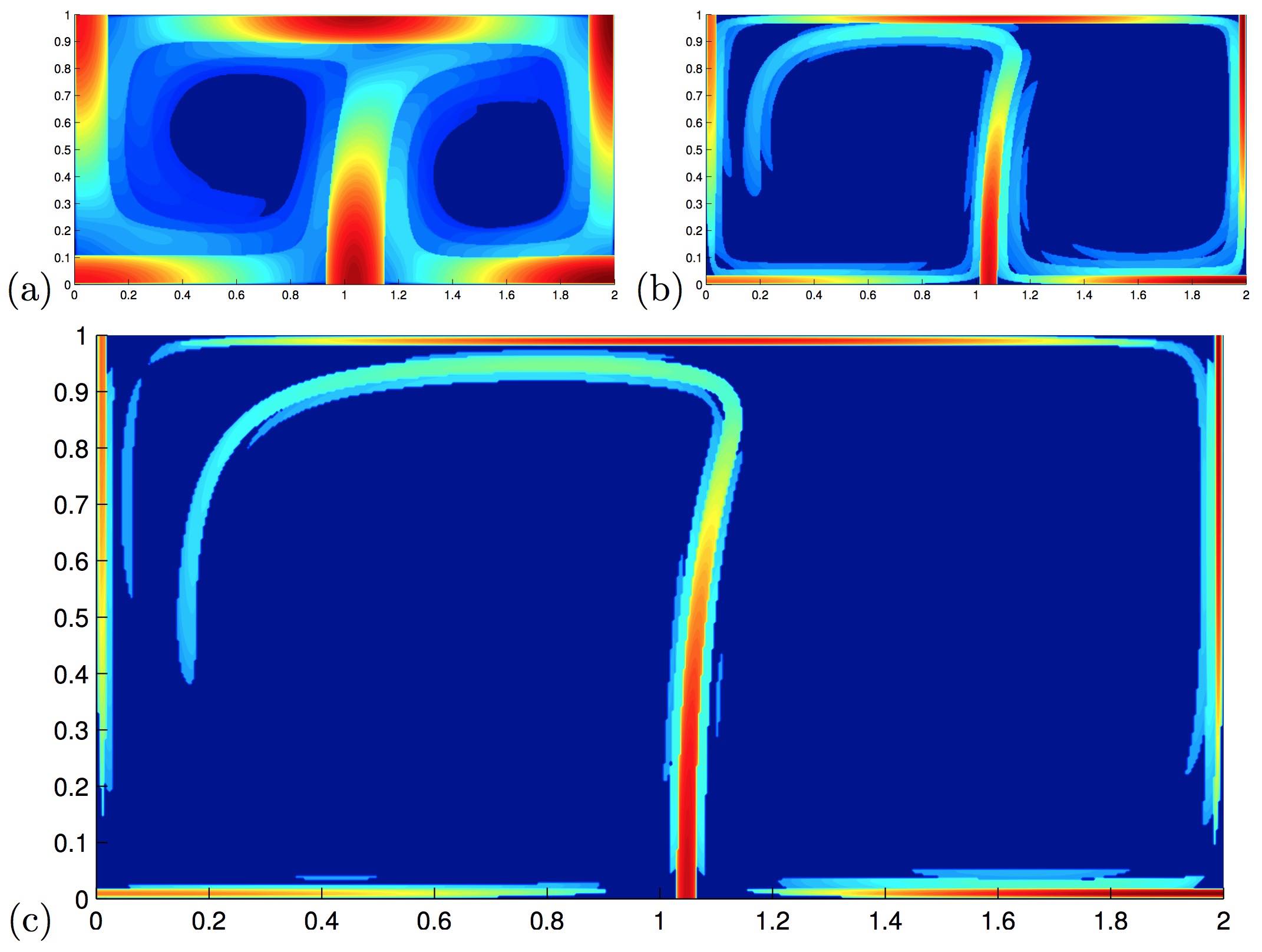}
}
\caption{(Section \ref{SubSec:Example_Double_Gyre}) The ISLE fields $\gamma_r(\mathbf{x},0)$ computed using our proposed Eulerian approach in Section \ref{SubSec:newEulerian} with (a) $r=3$, (b) $r=10$ and (c) $r=20$. It is clear that the FTLE ridges in Figure \ref{Fig:FTLE} are located inside some narrow regions of the ISLE ridges.}
\label{Fig:ISLE1}
\end{figure}

\begin{figure}[!htb]
\centering{
\includegraphics[width=12cm]{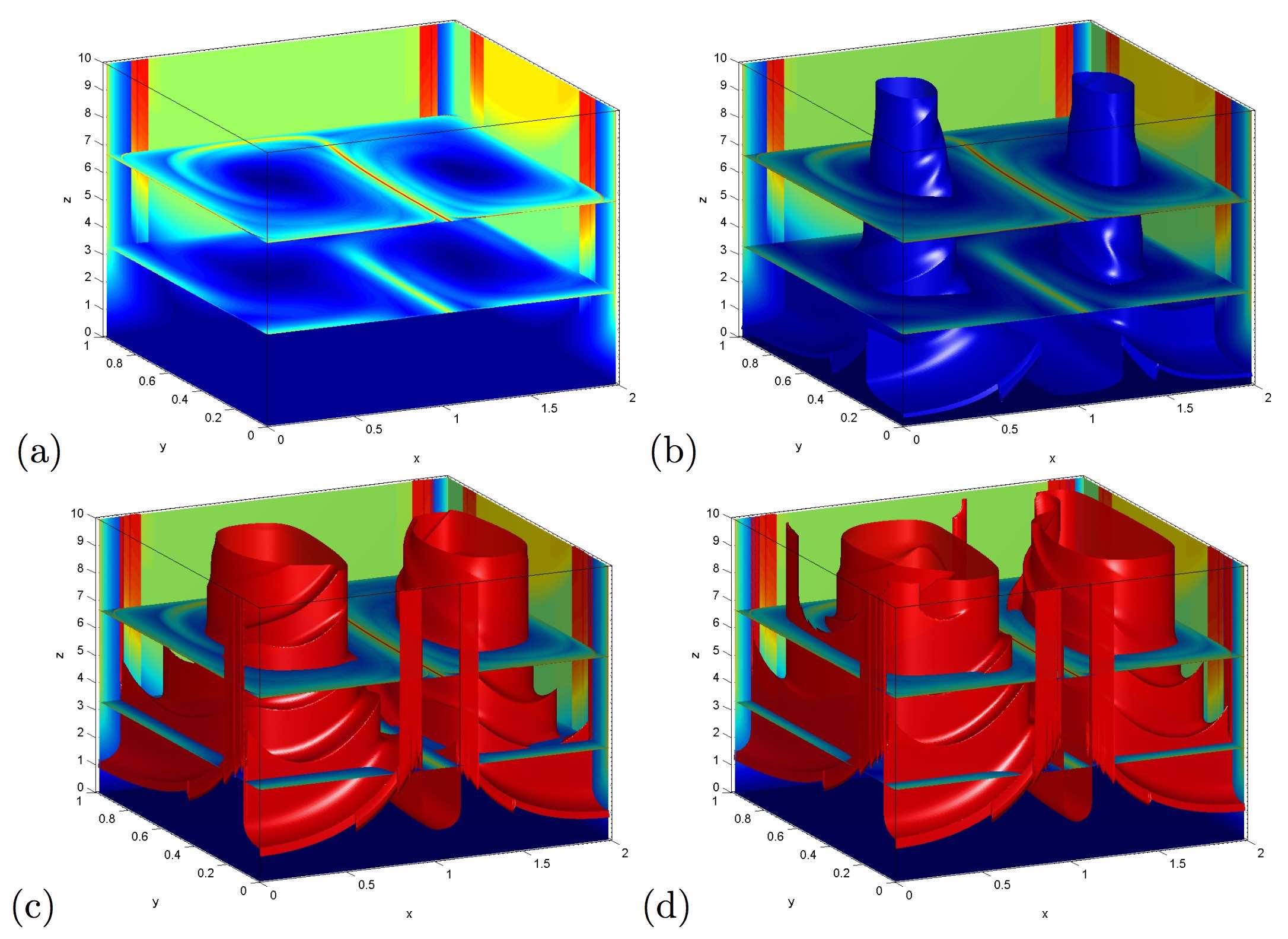}
}
\caption{(Section \ref{SubSec:Example_Double_Gyre} with $\epsilon=0.1$)
(a) The FTLE field $\sigma_0^t$ for $t=\frac{10}{3}$ and $t=\frac{20}{3}$.
The isosurfaces of the ISLE field $\gamma_r(\mathbf{x},0)$ with (b) $r$=1.5, (c) $r$=3 and (d) $r$=4. }
\label{Fig:Isosurface}
\end{figure}

In Figure \ref{Fig:FTLE}, we compare the FTLE field $\sigma_0^{10}(\mathbf{x})$ of the double-gyre flow with $\epsilon=0.1$ computed using the Lagrangian approach and our proposed Eulerian approach, respectively. These two solutions match extremely well. \reminder{We have also checked the $L_2$-error of the forward flow map $\Phi_0^{10}(x,y)$ using different $\Delta x$'s ranging from $1/32$ to $1/256$. Since we do not have the exact solution for this flow, the \textit{exact} solutions are computed using the Lagrangian ray tracing with a very small time step. Figure \ref{Fig:convergence} shows the errors in the flow map $\Phi_0^{10} : (x,y) \rightarrow \left(\phi(x,y),\psi(x,y) \right)$ with respect to different $\Delta x$'s. We find that the flow map computed using the proposed Eulerian approach is approximately second order accurate.
}

Figures \ref{Fig:ISLE1} shows the ISLE fields $\gamma_r(\mathbf{x},0)$ with several separation factors $r$. In the implementations, the flow map is computed from $t=0$ up to $t=10$. Recall that we impose zero ISLE value at points whose local separation rate is less than the given separation factor. With the increasing separation factor, the regions with non-zero ISLE values, get smaller. Nevertheless, the regions with non-zero or high ISLE values, always concentrate near the location of the FTLE ridges shown in Figure \ref{Fig:FTLE}. We denote by $\mathcal{M}$ the most prominent ridge originating from the central bottom of the domain. Note that a rough estimate of the minimum FTLE value on $\mathcal{M}$ is approximately 0.3. And so we have the estimate $\min\limits_{\x \in \mathcal{M}} \sqrt{\lambda_0^t(\x)} \geq e^{0.3 \times 10}= e^3 > 20$. Using \reminder{Theorem} \ref{prop:funProp1}, we can always find a tubular neighborhood near $\mathcal{M}$ of non-zero ISLE using a separation factor $r$ smaller than approximately 20. In Figure \ref{Fig:ISLE1}, we have computed the ISLE using various separation factors from $r=3$ to $r=20$. As $r$ increases, we can see that there are indeed non-zero ISLE regions around the major ridge $\mathcal{M}$. And evidently, the ISLE ridges lie within these regions.

To better compare the FTLE field and the ISLE field, we plot these two solutions together in Figure \ref{Fig:Isosurface}. The $x$- and the $y$- axis represent the computational domain $\Omega$ while the $z$ axis denotes the temporal direction. In (a), we plot the FTLE field $\sigma_0^{10/3}(\mathbf{x})$ at $t=10/3$ and $\sigma_0^{20/3}(\mathbf{x})$ at $t=20/3$. In (b-d), we plot the isosurfaces of the ISLE field $\lambda_r(\mathbf{x},0)$ with $r=1.5,3,4$, respectively. The $z$-values denote the time required for the local separation ratio to achieve $r$ for the first time, i.e. the $z$-values are the $\tau_r(\mathbf{x})$ we defined before. Near the major FTLE ridge, we can see the values of $\tau_r(\x)$ are significantly lower than the remaining part of the domain. Therefore the corresponding ISLE values are generalized maxima. This shows us the ISLE ridges match with the FTLE ridges very well.


\subsection{A simple analytic field}
\label{SubSec:Example_Analytic}

\begin{figure}[!htb]
\centering{
\includegraphics[width=12cm]{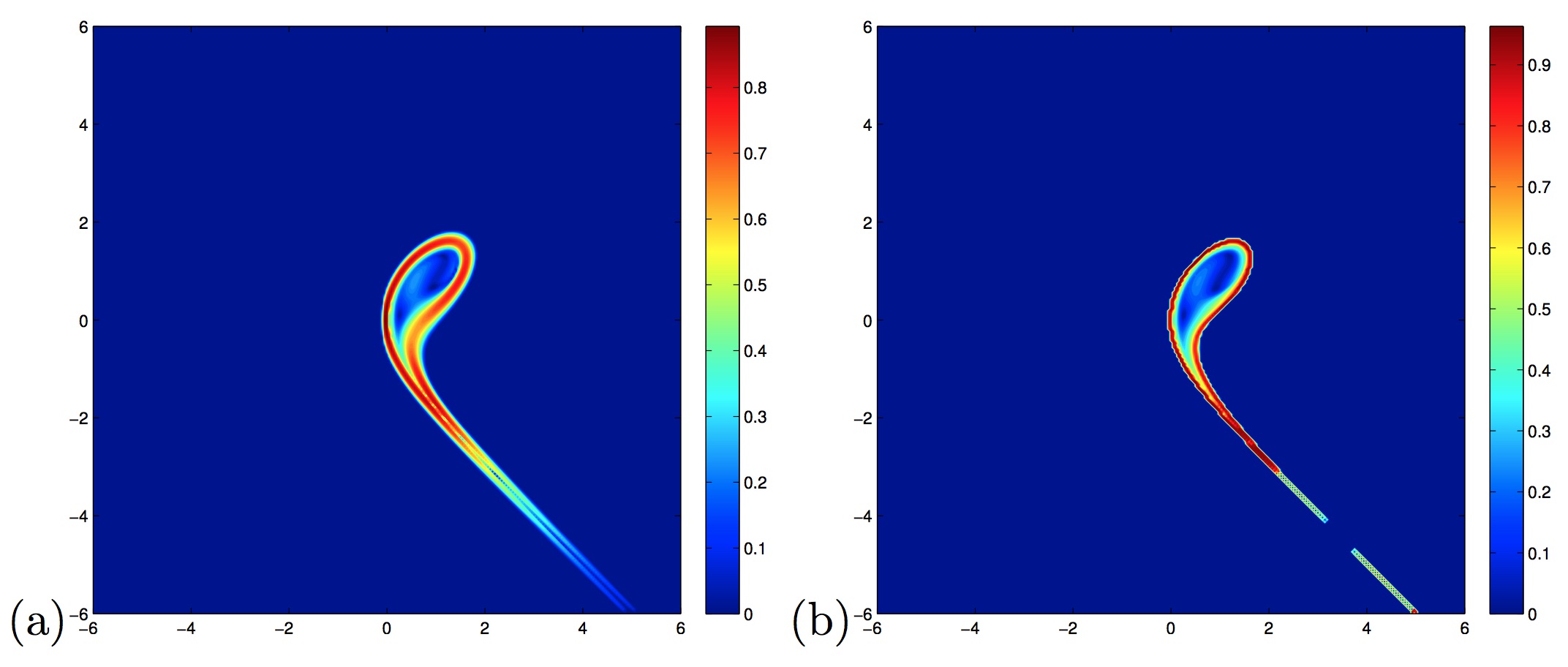}
}
\caption{(Section \ref{SubSec:Example_Analytic}) The FTLE field $\sigma_0^5(\mathbf{x})$ computed using (a) the original Eulerian approach \reminder{in Section \ref{SubSec:oldEulerian}}, and (b) our proposed Eulerian approach in Section \ref{SubSec:newEulerian}.}
\label{Fig:FTLE2}
\end{figure}

\begin{figure}[!htb]
\centering{
\includegraphics[width=12cm]{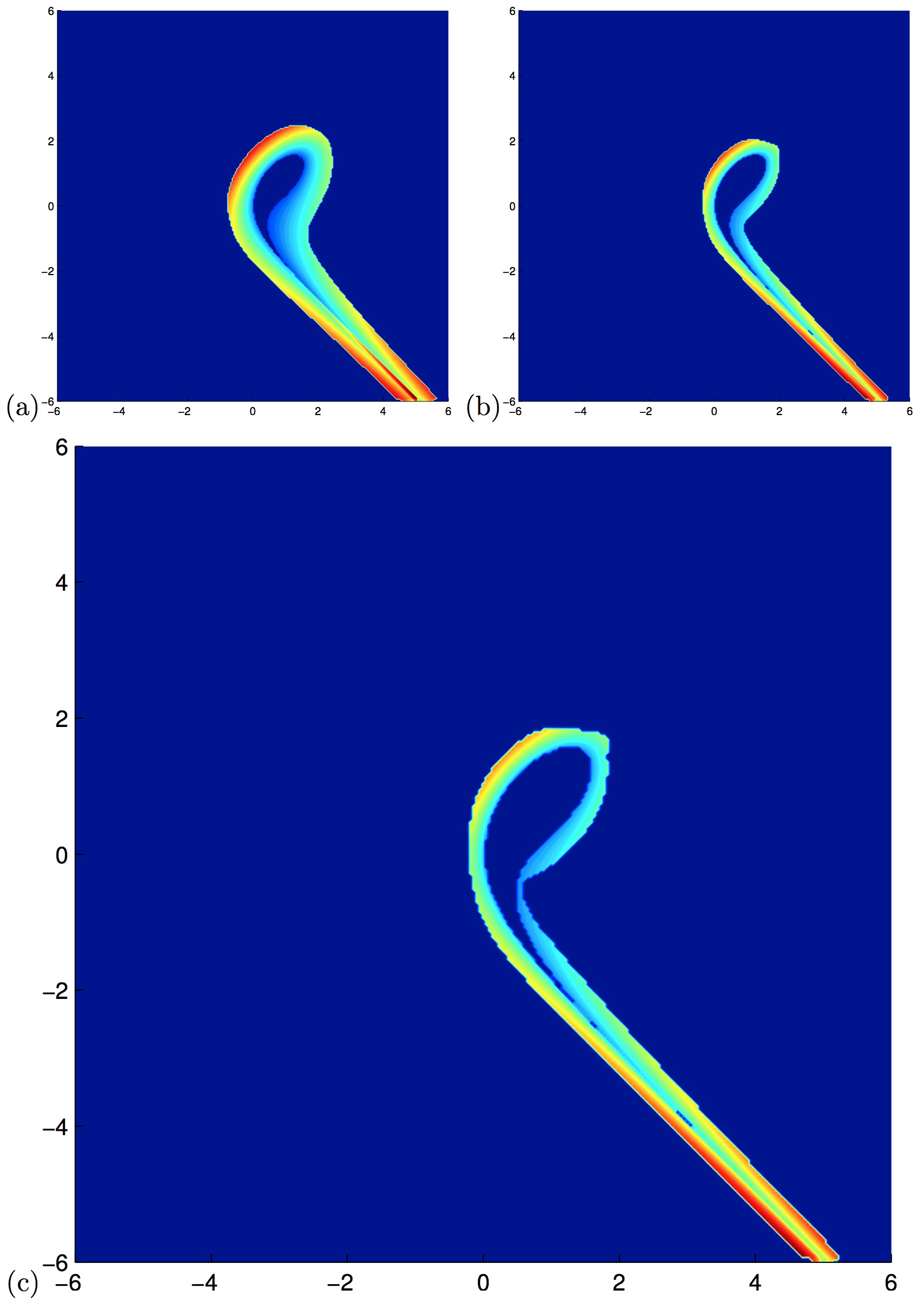}
}
\caption{(Section \ref{SubSec:Example_Analytic}) \reminderre{The computed ISLE field using the separation factor given by (a) $e^{3.5}$, (b) $e^4$ and (c) $e^{4.2}$. The outline of the ISLE ridge becomes clearer with an increasing separation factor.}}
\label{Fig:ISLE2a}
\end{figure}

\begin{figure}[!htb]
\centering{
\includegraphics[width=12cm]{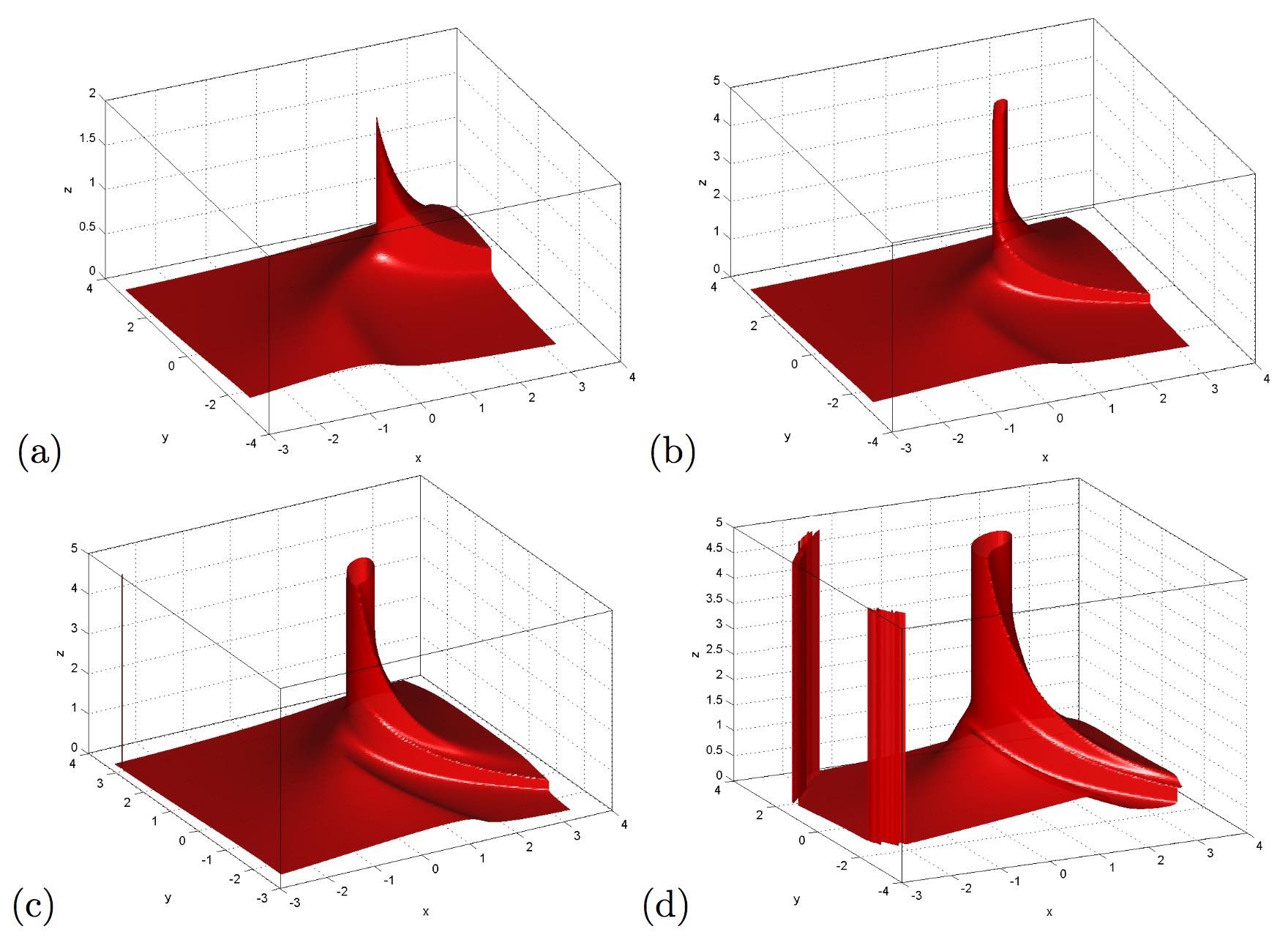}
}
\caption{(Section \ref{SubSec:Example_Analytic}) The isosurfaces of the ISLE field $\gamma_r(\mathbf{x},0)$ with (a) $r$=1, (b) $r$=2, (c) $r$=3 and (d) $r$=4. }
\label{Fig:ISLE2}
\end{figure}

We consider a simple analytical field \cite{tanchahal10} given by $u=x-y^2$ and $v=-y+x^2$ in this example. The computational domain is $[-6,6]^2$ and we discretize the domain with $\Delta x=\Delta y= 12/256$. We impose the fixed inflow boundary condition and non-reflective outflow boundary condition as in \cite{leu11}. Figure \ref{Fig:FTLE2} shows the FTLE field $\sigma_0^5(\mathbf{x})$ computed using our original Eulerian approach and the proposed Eulerian approach. It is clear that the new approach can successfully handle the inflow and outflow of particles on the boundaries, just like the previous approach does. Next, we are going to apply the results in Section \ref{Sec:Relation} to analyze the ISLE ridges. Figure \ref{Fig:FTLE2} suggests that there is only one FTLE ridge. And the minimum value of $\sigma_0^5$ on the ridge is close to $0.8$. By the definition of FTLE, we have $$
\sqrt{\lambda_0^5(\x)} = \exp\left[{\sigma_0^5(\x)\times 5}\right] \, .
$$
Therefore we can conclude that $e^{0.8 \times 5} = e^{4}$ is a reasonable estimate of the minimum value of $\sqrt{\lambda_0^5(\x)}$ on the ridge. We have tested with several separation factors that are near $e^4$, and the results can be plotted in Figure \ref{Fig:ISLE2a}. To better visualize the ISLE field, we plot in Figure \ref{Fig:ISLE2} the time $\tau_r(\mathbf{x})$ required for different separation factors $r$. The shorter the time, the bigger the corresponding ISLE.

\subsection{The forced-damped Duffing van der Pol equation}
\label{SubSec: DuffingVanDerPol}

\begin{figure} [!htb]
\centering{
\includegraphics[width=12cm]{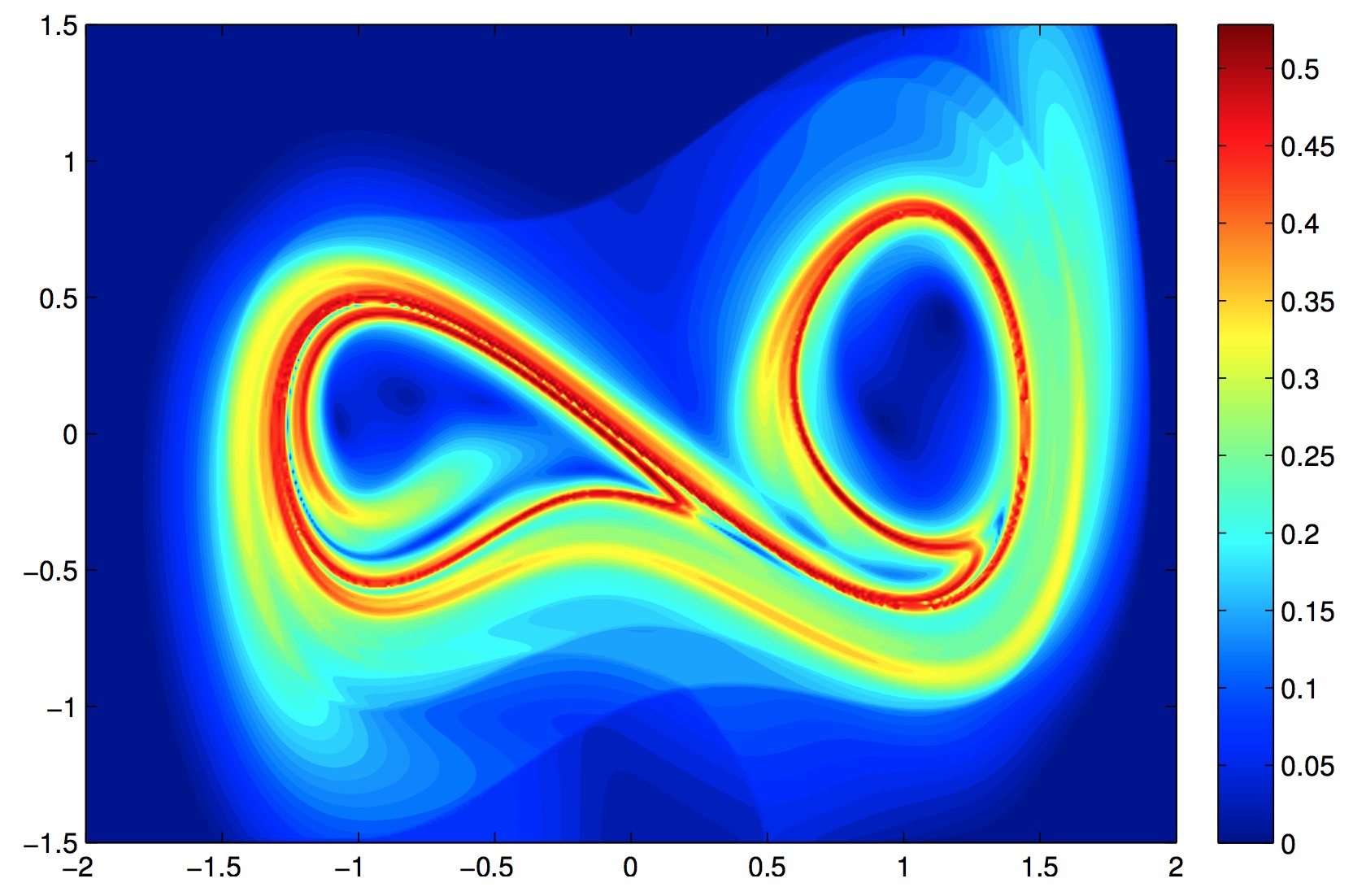}
}
\caption{(Section \ref{SubSec: DuffingVanDerPol}) The FTLE field $\sigma_0^{10}(\x)$ computed using our proposed Eulerian approach in Section \ref{SubSec:newEulerian}. For better comparison with the ISLE in Figure \ref{Fig:ISLE3}, we replace negative FTLE values by 0.}
\label{Fig:FTLE3}
\end{figure}

\begin{figure}[!htb]
\centering{
\includegraphics[width=12cm]{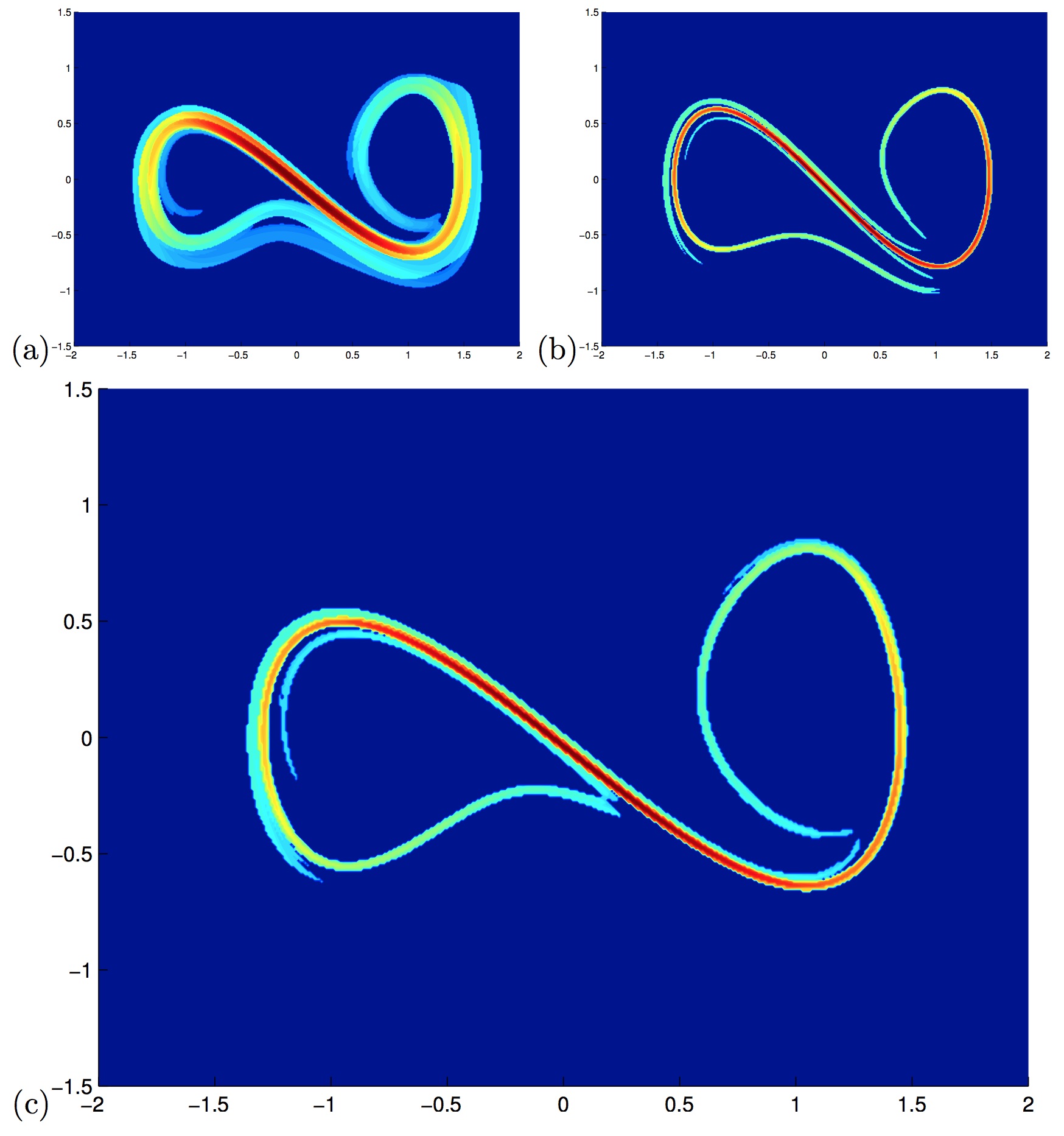}
}
\caption{(Section \ref{SubSec: DuffingVanDerPol}) The ISLE field $\gamma_r(\x)$ computed using our proposed Eulerian approach in Section \ref{SubSec:newEulerian} with (a) $r=e^3$, (b) $r=e^4$ and (c) $r=e^{4.1}$. For $r=e^{4.1}$, the ISLE ridges coincide with FTLE ridges in the Figure \ref{Fig:FTLE3}.}
\label{Fig:ISLE3}
\end{figure}

We consider the dynamical system governed by a Duffing and a van der Pol oscillator, as in \cite{halsap11}. The system is given by
$$
u=y \, , \, v=x-x^3+0.5y(1-x^2)+0.1 \sin t \, .
$$

The computational domain is $[-2,2]\times [-1.5,1.5]$, with mesh size $\Delta x = \Delta y = 0.01$. We simulate the flow from $t=0$ to $t=10$. The discretization size is rather coarse for such a complicated dynamics, but our new approach can still demonstrate the fine details of the transport barrier as shown in Figure \ref{Fig:FTLE3}. There are several major FTLE ridges that are easily spotted. The minimum value of $\sigma_0^t(\x)$ on those FTLE ridges are very close to $0.4$. So $\sqrt{\lambda_0^t(\x)} \geq e^{0.4 \times 10} = e^4$. \reminder{Theorem} \ref{prop:funProp1} implies that the separation factor $r=e^4$ can approximately single out the ISLE ridges. We pick several separation factors that are close to $e^{4}$, and we find that the ISLE ridges corresponding to $r=e^{4.1}$, as shown in Figure \ref{Fig:ISLE3}(c), have the greatest resemblance to the FTLE ridges in Figure \ref{Fig:FTLE3}.

\subsection{\reminder{The Ocean Surface Current Analyses Real-time (OSCAR) dataset}}
\label{SubSec:OSCAR}

\begin{figure} [!htb]
\centering{
\includegraphics[width=12cm]{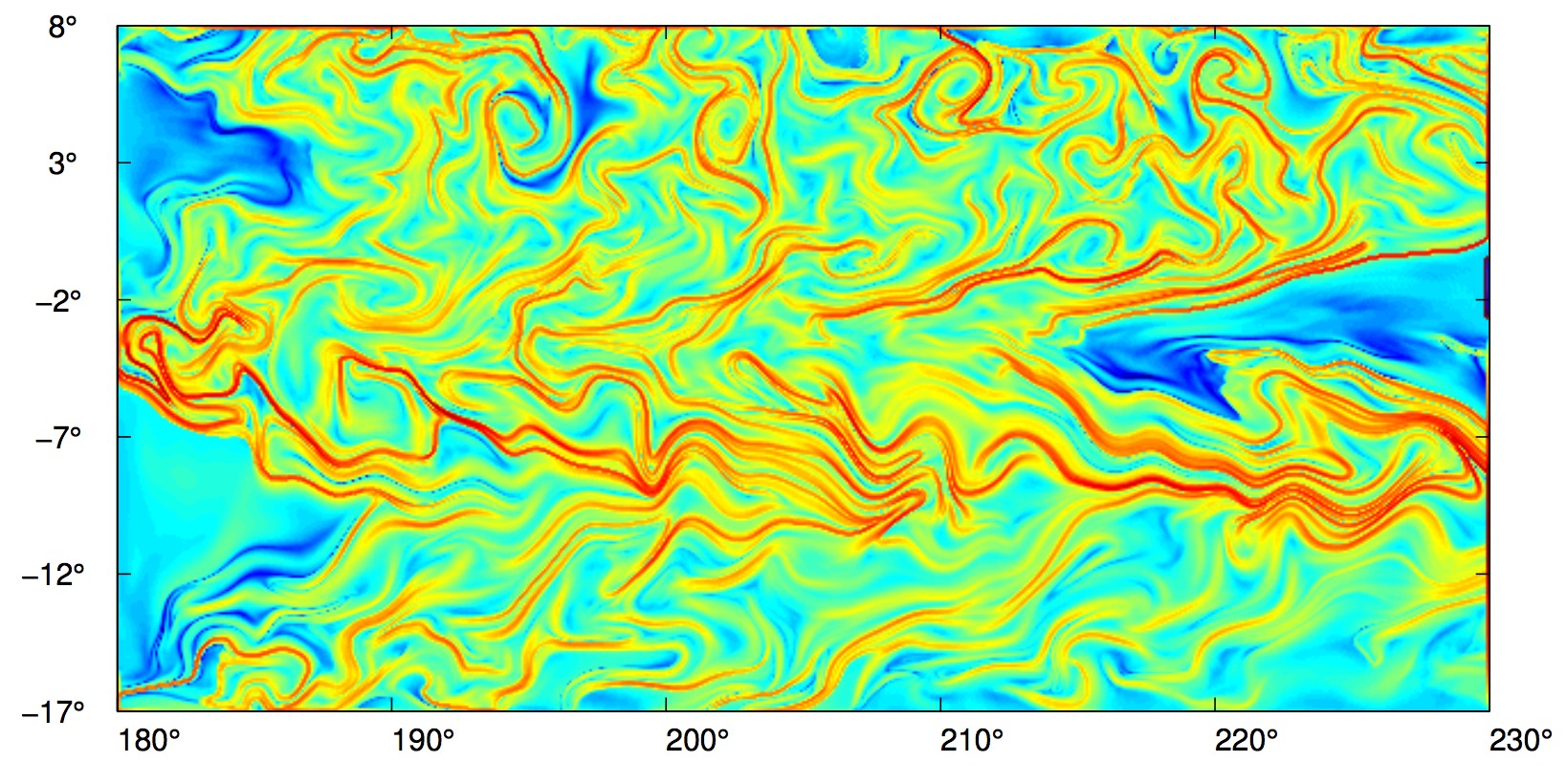}
}
\caption{\reminder{(Section \ref{SubSec:OSCAR}) The FTLE field $\sigma_0^{50}(\x)$ computed using our proposed Eulerian approach in Section \ref{SubSec:newEulerian}.}}
\label{Fig:OSCAR_FTLE}
\end{figure}

\begin{figure}[!htb]
\centering{
\includegraphics[width=12cm]{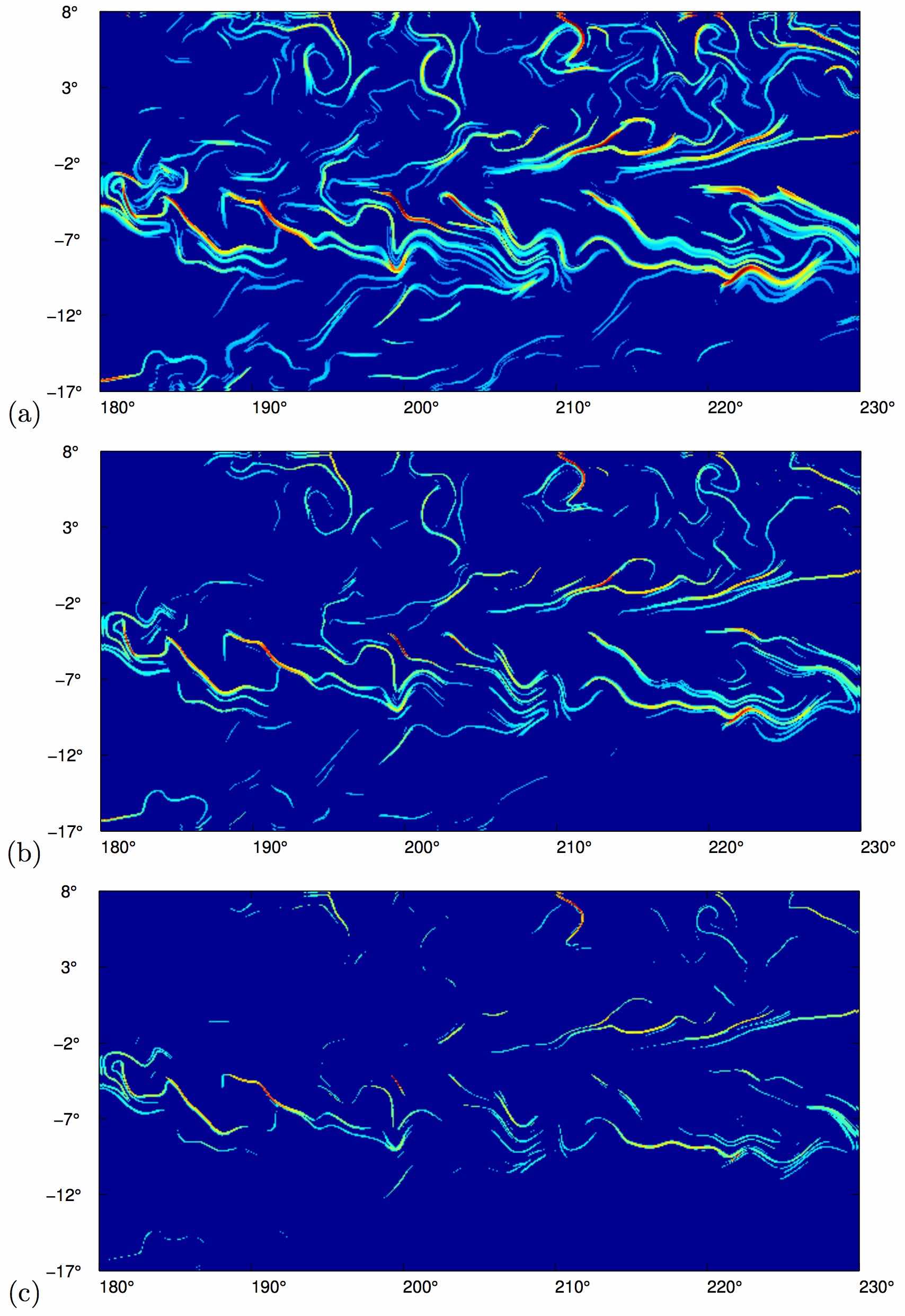}
\caption{\reminder{(Section \ref{SubSec:OSCAR}) The ISLE field $\gamma_r(\x)$ of OSCAR data computed using our proposed Eulerian approach in Section \ref{SubSec:newEulerian} with (a) $r=12\approx e^{2.5}$, (b) $r=20\approx e^{3}$, and (c) $r=30\approx e^{3.5}$.}}
\label{Fig:OSCAR_ISLE}
}
\end{figure}

\reminder{To further demonstrate the effectiveness and robustness of our proposed approach, we test our algorithm on a real life dataset obtained from the Ocean Surface Current Analyses Real-time (OSCAR)}\footnote{\url{http://www.esr.org/oscar_index.html}}. \reminder{The OSCAR data were obtained from JPL Physical Oceanography DAAC developed by ESR. It measures ocean surface flow in the region from $-80^{\circ}$ to $80^{\circ}$ in latitude and from $0^{\circ}$ to $360^{\circ}$ longitude. The time resolution is about 5 days and the spatial resolution is $1/3^{\circ}$ in each direction.}

\reminder{
In this numerical example, we consider the area near the Line Islands in the region from $-17^{\circ}$ to $8^{\circ}$ in latitude and from $180^{\circ}$ to $230^{\circ}$ longitude. To have a better visualization of both the FTLE and ISLE field, we interpolate the velocity data to obtain a finer resolution of $0.25$ days in the temporal direction and $1/12^{\circ}$ in each spatial direction. Then we look at the ocean surface current within the first 50 days in the year 2014 to obtain the forward FTLE field $\sigma_0^{50}(\x)$ and the ISLE field $\gamma_r(\x)$. }

 \reminder{
As demonstrated in Figure \ref{Fig:OSCAR_FTLE}, our new approach works well in extracting fine details of the transport barrier even for real data. There are a few major FTLE ridges spotted in the figure. According to Theorem \ref{prop:funProp1}, we found that the separation factor $r$ roughly equals to $e^{2.5}$ to $e^{3.5}$ can single out the ISLE ridges. We have tested several separation factors that are close to $e^3 \approx 20$, as shown in Figure \ref{Fig:OSCAR_ISLE}. We find that the ISLE ridges corresponding to $r=20$, in Figure \ref{Fig:OSCAR_ISLE}(b), have the greatest resemblance to the FTLE ridges in Figure \ref{Fig:OSCAR_FTLE}. As a comparison, we have also plotted the ISLE field $\gamma_r(\x)$ for $r=12$ and $r=30$ in Figure \ref{Fig:OSCAR_ISLE} (a) and (c), respectively. These results verify the theory developed in Section \ref{Sec:Relation}.}

\reminder{
\section{Conclusion}
To summarize, we emphasize the novelty and importance of both the two Eulerian algorithms we developed in this paper. Based on the first algorithm, we are now able to determine the \textit{forward} flow map \textit{on the fly} so that the PDE is solved \textit{forward} in time. Unlike the original Eulerian algorithm as developed in \cite{leu11,leu13}, there is no need to store the whole velocity field at all time steps at the beginning of the computations. This makes the forward flow map computations practical. More importantly, because of the first algorithm, we are now able to access all intermediate forward flow maps which is necessary in computing the ISLE field computations. This Eulerian approach is extremely efficient in computing the ISLE field. To the best of our knowledge, this is the first efficient numerical approach for the ISLE computations.
Theoretically, this paper provides a rigorous mathematical analysis to explain the similarity between the FTLE ridges and the ISLE ridges based on \cite{ppss14}, which has been verified by the proposed Eulerian approach.
}

\section*{Acknowledgment}
The work of You was supported by the Talents Introduction Project of Nanjing Audit University, and the Natural Science Foundation of Jiangsu Higher Education Institutions of China (No.16KJB110012). The work of Leung was supported in part by the Hong Kong RGC grants 16303114 and 16309316.

\bibliographystyle{plain}

\end{document}